\def\diag{\mathop{\mathrm{diag}}}
\newcommand\hcancel[2][black]{\setbox0=\hbox{$#2$}%
\rlap{\raisebox{.25\ht0}{\textcolor{#1}{\rule{0.7\wd0}{0.75pt}}}}#2} 
\newcommand\hcancelt[2][black]{\setbox0=\hbox{$#2$}%
\rlap{\raisebox{.25\ht0}{\textcolor{#1}{\hspace{0.3mm}\rule{0.7\wd0}{0.75pt}}}}#2} 
\newtheorem{thm}{Theorem}[section]
\newtheorem{lem}{Lemma}[section]
\newtheorem{rem}{Remark}[section]
\theoremstyle{definition}
\numberwithin{algorithm}{section}
\numberwithin{equation}{section}
\renewcommand{\theequation}{\thesection.\arabic{equation}}
\def\simgt{\,\hbox{\lower0.6ex\hbox{$>$}\llap{\raise0.3ex\hbox{$\sim$}}}\,}
\def\simlt{\,\hbox{\lower0.6ex\hbox{$<$}\llap{\raise0.3ex\hbox{$\sim$}}}\,}
\def\simgteq{\,\hbox{\lower0.6ex\hbox{$\ge$}\llap{\raise0.6ex\hbox{$\sim$}}}\,}
\def\simlteq{\,\hbox{\lower0.6ex\hbox{$\le$}\llap{\raise0.6ex\hbox{$\sim$}}}\,}
\def\applteq{\,\hbox{\lower0.6ex\hbox{$\le$}\llap{\raise0.8ex\hbox{$\approx$}}}\,}
\def\applt{\,\hbox{\lower0.6ex\hbox{$<$}\llap{\raise0.5ex\hbox{$\approx$}}}\,}
\DeclareMathAlphabet\mathbfcal{OMS}{cmsy}{b}{n}
\DeclareMathAlphabet{\mathpzc}{OT1}{pzc}{m}{it}
\DeclareMathAlphabet\euscr{U}{eus}{m}{n}
\DeclareMathOperator\supp{supp}
\def\user@resume{resume}
\def\user@intermezzo{intermezzo}
\def\simgt{\,\hbox{\lower0.6ex\hbox{$>$}\llap{\raise0.4ex\hbox{$\sim$}}}\,}
\def\simlt{\,\hbox{\lower0.6ex\hbox{$<$}\llap{\raise0.4ex\hbox{$\sim$}}}\,}
\def\simgteq{\,\hbox{\lower0.6ex\hbox{$\ge$}\llap{\raise0.6ex\hbox{$\sim$}}}\,}
\def\simlteq{\,\hbox{\lower0.6ex\hbox{$\le$}\llap{\raise0.6ex\hbox{$\sim$}}}\,}
\newcounter{previousequation}
\newcounter{lastsubequation}
\newcounter{savedparentequation}
\renewenvironment{subequations}[1][]{%
      \def\user@decides{#1}%
      \setcounter{previousequation}{\value{equation}}%
      \ifx\user@decides\user@resume 
           \setcounter{equation}{\value{savedparentequation}}%
      \else  
      \ifx\user@decides\user@intermezzo
           \refstepcounter{equation}%
      \else
           \setcounter{lastsubequation}{0}%
           \refstepcounter{equation}%
      \fi\fi
      \protected@edef\theHparentequation{%
          \@ifundefined {theHequation}\theequation \theHequation}%
      \protected@edef\theparentequation{\theequation}%
      \setcounter{parentequation}{\value{equation}}%
      \ifx\user@decides\user@resume 
           \setcounter{equation}{\value{lastsubequation}}%
         \else
           \setcounter{equation}{0}%
      \fi
      \def\theequation  {\theparentequation  \alph{equation}}%
      \def\theHequation {\theHparentequation \alph{equation}}%
      \ignorespaces
}{%
  \ifx\user@decides\user@resume
       \setcounter{lastsubequation}{\value{equation}}%
       \setcounter{equation}{\value{previousequation}}%
  \else
  \ifx\user@decides\user@intermezzo
       \setcounter{equation}{\value{parentequation}}%
  \else
       \setcounter{lastsubequation}{\value{equation}}%
       \setcounter{savedparentequation}{\value{parentequation}}%
       \setcounter{equation}{\value{parentequation}}%
  \fi\fi
  \ignorespacesafterend
}
\newcommand{\C}[1]{\mathcal{#1}}
\newcommand{\F}[1]{\mathbf{#1}}
\newcommand{\FR}[1]{\mathfrak{#1}}
\newcommand{\MB}[1]{\mathbb{#1}}
\newcommand{\ME}[1]{\euscr{#1}}
\newcommand{\MBG}{\MB{G}}
\newcommand{\MBSG}{\hat{\MBG}}
\newcommand{\MBR}{\mathbb{R}}
\newcommand{\MBC}{\mathbb{C}}
\newcommand{\MBRP}{\MBR^+}
\newcommand{\MBRzer}{\MBR_0}
\newcommand{\MBRczer}{\MBR_{\hcancel{0}}}
\newcommand{\MBRzerP}{\MBRzer^+}
\newcommand{\MBRmhzer}{\MBR_{-1/2}^-}
\newcommand{\MBZ}{\mathbb{Z}}
\newcommand{\MBZP}{\MBZ^+}
\newcommand{\MBZzer}{\MBZ_0}
\newcommand{\MBZzerP}{\MBZzer^+}
\newcommand{\MBZe}{\MBZ_e}
\newcommand{\MBZeP}{\MBZe^+}
\newcommand{\MFI}{\mathfrak{I}}
\newcommand{\MBJ}{\mathbb{J}}
\newcommand{\MBJP}{\mathbb{J}^+}
\newcommand{\MBN}{\mathbb{N}}
\newcommand{\MFF}{\mathfrak{F}}
\newcommand{\MFC}{\mathfrak{C}}
\newcommand{\cancbra}[1]{\hcancel{[}#1\hcancelt{]}}
\newcommand{\hG}{\hat{G}}
\newcommand{\hK}{\hat{K}}
\newcommand{\hvartheta}{\hat{\vartheta}}
\newcommand{\bmx}{\bm{x}}
\newcommand{\bmt}{\bm{t}}
\newcommand{\bmz}{\bm{z}}
\newcommand{\bmv}{\bm{v}}
\newcommand{\bmy}{\bm{y}}
\newcommand{\hFP}{\hat{\F{P}}}
\newcommand{\hFD}{\hat{\F{D}}}
\newcommand{\bmh}{\bm{h}}
\newcommand{\bmzer}{\bm{\mathit{0}}}
\newcommand{\bmone}{\bm{\mathit{1}}}
\newcommand{\hbmx}{\hat\bmx}
\newcommand{\hx}{\hat{x}}
\newcommand{\hchi}{\hat{\chi}}
\newcommand{\hlambdabar}{\hat \lambdabar}
\newcommand{\hvarpi}{\hat \varpi}
\newcommand{\FOmega}{\F{\Omega}}
\newcommand{\IFOmega}{\FOmega^{\circ}}
\newcommand{\foralla}{\,\forall_{\mkern-6mu a}\,}
\newcommand{\forallaa}{\,\forall_{\mkern-6mu aa}\,}
\newcommand{\foralle}{\,\forall_{\mkern-6mu e}\,}
\newcommand{\foralls}{\,\forall_{\mkern-6mu s}\,}
\newcommand{\forallS}{\,\forall_{\mkern-4mu rs}\,}
\newcommand{\forallL}{\,\forall_{\mkern-4mu rl}\,}
\newcommand{\Def}[1]{\text{Def}\left(#1\right)}
\newcommand{\trp}[1]{\text{trp}\left(#1\right)}
\newcommand{\resh}[3]{\text{resh}_{#1,#2}\left(#3\right)}
\newcommand{\reshCB}[3]{\text{resh}_{#1,#2}\left[#3\right]}
\newcommand{\reshTCB}[3]{\text{resh}_{#1,#2}^{\top}\left[#3\right]}
\newcommand{\reshs}[2]{\text{resh}_{#1}\left(#2\right)}
\newcommand{\Diag}[1]{\diag\left(#1\right)}
\newcommand{\sigmabar}{\mathord{\sigma\kern-0.6em\raisebox{-1ex}{$\bar{\phantom{\sigma}}$}}}
\newcommand{\sigmabarmax}{\mathord{\sigma_{\max}\kern-1.9em\raisebox{-0.8ex}{$\bar{\phantom{\sigma}}$}}\;\;\;\;\;}
\newcommand{\sigmabarmin}{\mathord{\sigma_{\min}\kern-1.75em\raisebox{-0.8ex}{$\bar{\phantom{\sigma}}$}}\;\;\;\;\;}
\newcommand{\CapD}[3]{\,{}^{c}D_{#1}^{#2}#3}
\newcommand{\CapDE}[3]{\,{}^{E}D_{#1}^{#2}#3}
\newcommand{\CapDEN}[3]{\,{}_{n_q,\lambda_q}^{n,\lambda,E}D_{#1}^{#2}#3}
\newcommand{\CapIM}[2]{\,{}^{E}\F{Q}_{#1}^{#2}}
\newcommand{\hCapIM}[2]{\,{}^{E}\hat{\F{Q}}_{#1}^{#2}}
\newcommand{\EE}[2]{{#1}^{\kern-.15em\circ #2}}
\newcommand{\Part}[1]{\partial_{#1}}
\newcommand{\NPart}[2]{\partial_{#1}^{\,#2}}
\def\BState{\State\hskip-\ALG@thistlm}
    \newcommand*{\algrule}[1][\algorithmicindent]{\makebox[#1][l]{\hspace*{.5em}\thealgruleextra\vrule height \thealgruleheight depth \thealgruledepth}}%
\newcommand*{\thealgruleextra}{}
\newcommand*{\thealgruleheight}{.75\baselineskip}
\newcommand*{\thealgruledepth}{.25\baselineskip}
\def\ALG@printindent{%
    \ifnum \theALG@nested>0
        \ifx\ALG@text\ALG@x@notext
        \else
            \unskip
            \addvspace{-1pt}
            \ALG@printindent@tempcnta=1
            \loop
                \algrule[\csname ALG@ind@\the\ALG@printindent@tempcnta\endcsname]%
                \advance \ALG@printindent@tempcnta 1
            \ifnum \ALG@printindent@tempcnta<\numexpr\theALG@nested+1\relax
            \repeat
        \fi
    \fi
    }%
\patchcmd{\ALG@doentity}{\noindent\hskip\ALG@tlm}{\ALG@printindent}{}{\errmessage{failed to patch}}
\newbox\statebox
\newcommand{\myState}[1]{%
    \setbox\statebox=\vbox{#1}%
    \edef\thealgruleheight{\dimexpr \the\ht\statebox+1pt\relax}%
    \edef\thealgruledepth{\dimexpr \the\dp\statebox+1pt\relax}%
    \ifdim\thealgruleheight<.75\baselineskip
        \def\thealgruleheight{\dimexpr .75\baselineskip+1pt\relax}%
    \fi
    \ifdim\thealgruledepth<.25\baselineskip
        \def\thealgruledepth{\dimexpr .25\baselineskip+1pt\relax}%
    \fi
    \State #1%
    \def\thealgruleheight{\dimexpr .75\baselineskip+1pt\relax}%
    \def\thealgruledepth{\dimexpr .25\baselineskip+1pt\relax}%
}
\newcommand{\oset}[3][0ex]{%
  \mathrel{\mathop{#3}\limits^{
    \vbox to#1{\kern-2\ex@
    \hbox{$\scriptstyle#2$}\vss}}}}
\def\ps@pprintTitle{%
  \let\@oddhead\@empty
  \let\@evenhead\@empty
  \let\@oddfoot\@empty
  \let\@evenfoot\@oddfoot
}
\begin{document}
\begin{frontmatter}
\title{The Numerical Approximation of Caputo Fractional Derivative of Higher-Orders Using A Shifted Gegenbauer Pseudospectral Method: Two-Point Boundary Value Problems of the Bagley–Torvik Type Case Study}
\author[Ajman,NDRC]{Kareem T. Elgindy\corref{cor1}}
\ead{k.elgindy@ajman.ac.ae}
\address[Ajman]{Department of Mathematics and Sciences, College of Humanities and Sciences, Ajman University, P.O.Box: 346 Ajman, United Arab Emirates}
\address[NDRC]{Nonlinear Dynamics Research Center (NDRC), Ajman University, P.O.Box: 346 Ajman, United Arab Emirates}
\cortext[cor1]{Corresponding author}
\begin{abstract}
This work presents a new framework for approximating Caputo fractional derivatives (FDs) of any positive order using a shifted Gegenbauer pseudospectral (SGPS) method. By transforming the Caputo FD into a scaled integral of the $m$th-derivative of the Lagrange interpolating polynomial (with $m$ being the ceiling of the fractional order $\alpha$), we mitigate the singularity near zero, improving stability and accuracy. The method links $m$th-derivatives of shifted Gegenbauer (SG) polynomials with SG polynomials of lower degrees, allowing for precise integration using SG quadratures. We employ orthogonal collocation and SG quadratures in barycentric form to obtain an accurate and efficient approach for solving fractional differential equations. We provide error analysis showing that the SGPS method is convergent in a semi-analytic framework and conditionally convergent with exponential rate for smooth functions in finite-precision arithmetic. This exponential convergence improves accuracy compared to wavelet-based, operational matrix, and finite difference methods. The SGPS method is flexible, with adjustable SG parameters for optimal performance. A key contribution is the fractional SG integration matrix (FSGIM), which enables efficient computation of Caputo FDs via matrix-vector multiplications and accelerates the SGPS method through pre-computation and storage. The method remains within double-precision limits, making it computationally efficient. It handles any positive fractional order $\alpha$ and outperforms existing schemes in solving Caputo fractional two-point boundary value problems (TPBVPs) of the Bagley-Torvik type.
\end{abstract}
\begin{keyword}
Bagley–Torvik; Caputo; Fractional derivative; Gegenbauer polynomials; Pseudospectral.\\
\textbf{MSC 2020 Classification:} 41A10, 65D30, 65L60.   
\end{keyword}
\end{frontmatter}

\begin{table*}[h]
\caption{\centering Table of Symbols and Their Meanings}
\centering
\resizebox{\textwidth}{!}{%
\begin{tabular}{|c|c|c|c|c|c|}
\hline
\textbf{Symbol} & \textbf{Meaning} & \textbf{Symbol} & \textbf{Meaning} & \textbf{Symbol} & \textbf{Meaning} \\
\hline
$\forall$ & for all & $\foralla$ & for any & $\forallaa$ & for almost all\\
\hline
$\foralle$ & for each & $\foralls$ & for some & $\forallS$ & for (a) relatively small \\
\hline
$\forallL$ & for (a) relatively large & $\gg$ & much greater than & $\exists$ & there exist(s) \\
\hline
$\sim$ & asymptotically equivalent & $\simlt$ & asymptotically less than & $\simlteq$ & asymptotically less than or equal to \\
\hline
$\not\approx$ & not sufficiently close to & $\MFC$ & set of all complex-valued functions & $\MFF$ & set of all real-valued functions \\
\hline
$\MBC$ & set of complex numbers & $\MBR$ & set of real numbers & $\MBRzer$ & set of non-negative real numbers \\
\hline
$\MBRczer$ & set of nonzero real numbers & $\MBRmhzer$ & $\{x \in \MBR: -1/2 < x < 0\}$ & $\MBZ$ & set of integers \\
\hline
$\MBZP$ & set of positive integers & $\MBZzerP$ & set of non-negative integers & $\MBZeP$ & set of positive even integers \\
\hline
$i$:$j$:$k$ & list of numbers from $i$ to $k$ with increment $j$ & $i$:$k$ & list of numbers from $i$ to $k$ with increment 1 & $y_{1:n}$ or $\left. y_i \right|_{i=1:n}$ & list of symbols $y_1, y_2, \ldots, y_n$ \\
\hline
$\{y_{1:n}\}$ & set of symbols $y_1, y_2, \ldots, y_n$ & $\MBJ_n$ & $\{0:n-1\}$ & $\MBJP_n$ & $\MBJ_n \cup \{n\}$ \\
\hline
$\MBN_n$ & $\{1:n\}$ & $\MBN_{m,n}$ & $\{m:n\}$ & $\MBG_n^{\lambda}$ & set of Gegenbauer-Gauss (GG) zeros of the $(n+1)$st-degree Gegenbauer polynomial with index $\lambda > -1/2$ \\
\hline
$\MBSG_n^{\lambda}$ & set of SGG points in the interval $[0, 1]$ & $\FOmega_{a,b}$ & closed interval $[a, b]$ & $\IFOmega$ & interior of the set $\FOmega$ \\
\hline
$\FOmega_{T}$ & specific interval $[0, T]$ & $\FOmega_{L \times T}$ & Cartesian product $\FOmega_{L} \times \FOmega_{T}$ & $\Gamma(\cdot)$ & Gamma function \\
\hline
$\Gamma(\cdot,\cdot)$ & upper incomplete gamma function & $\left\lceil {.} \right\rceil$ & ceiling function & $\MFI_{j \geq k}$ & indicator (characteristic) function $\begin{cases} 
1 & \text{if } j \geq k, \\
0 & \text{otherwise.}\end{cases}$ \\
\hline
$E_{\alpha, \beta}(z)$ & two-parameter Mittag-Leffler function & $(\cdot)_n$ & Pochhammer symbol & $\supp(f)$ & support of function $f$ \\
\hline
$f^*$ & complex conjugate of $f$ & $f_n$ & $f(t_n)$ & $f_{N,n}$ & $f_N(t_n)$ \\
\hline
$\C{I}_{b}^{(t)}h$ & $\int_0^{b} {h(t)\,dt}$ & $\C{I}_{a, b}^{(t)}h$ & $\int_a^{b} {h(t)\,dt}$ & $\C{I}_t^{(t)}h$ & $\int_0^t {h(.)\,d(.)}$ \\
\hline
$\C{I}_{b}^{(t)}h\cancbra{u(t)}$ & $\int_0^{b} {h(u(t))\,dt}$ & $\C{I}_{a,b}^{(t)}h\cancbra{u(t)}$ & $\int_a^b {h(u(t))\,dt}$ & $\C{I}_{\FOmega_{a,b}}^{(x)} h$ & $\int_a^b {h(x)\,dx}$ \\
\hline
$\Part{x}$ & $d/dx$ & $\NPart{x}{n}$ & $d^n/d x^n$ & $\CapD{x}{\alpha}{f}$ & $\alpha$th-order Caputo FD of $f$ at $x$ \\
\hline
$\Def{\FOmega}$ & space of all functions defined on $\FOmega$ & $C^k(\FOmega)$ & space of $k$ times continuously differentiable functions on $\FOmega$ & $L^p({\FOmega})$ & Banach space of measurable functions $u \in \Def{\FOmega}$ with ${\left\| u \right\|_{{L^p}}} = {\left( {{\C{I}_{\FOmega}}{{\left| u \right|}^p}} \right)^{1/p}} < \infty$ \\
\hline
$L^{\infty}({\FOmega})$ & space of all essentially bounded measurable functions on $\FOmega$ & $\left\|f\right\|_{L^{\infty}(\FOmega)}$ & $L^{\infty}$ norm: $\sup_{x \in \FOmega} |f(x)| = \inf\{M \ge 0: |f(x)| \le M\,\forallaa x \in \FOmega\}$ & $\left\|\cdot\right\|_1$ & $l_1$-norm \\
\hline
$\left\|\cdot\right\|_2$ & Euclidean norm & $\ME{H}^{k,p}(\FOmega)$ & Sobolev space of weakly differentiable functions with integrable weak derivatives up to order $k$ & $\bmt_N$ & $[t_{N,0}, t_{N,1}, \ldots, t_{N,N}]^{\top}$ \\
\hline
$g_{0:N}$ & $[g_0, g_1, \ldots, g_{N}]^{\top}$ & $g^{(0:N)}$ & $[g, g', \ldots, g^{(N)}]^{\top}$ & $c^{0:N}$ & $[1, c, c^2, \ldots, c^{N}]$ \\
\hline
$\bmt_N^{\top}$ or $[t_{N,0:N}]$ & $[t_{N,0}, t_{N,1}, \ldots, t_{N,N}]$ & $h(\bmy)$ & vector with $i$-th element $h(y_i)$ & $\bmh(\bmy)$ or $h_{1:m}\cancbra{\bmy}$ & $[h_1(\bmy), \ldots, h_m(\bmy)]^{\top}$ \\
\hline
$\bmy^{\div}$ & vector of reciprocals of the elements of $\bmy$ & $\F{O}_n$ & zero matrix of size $n$ & $\F{1}_n$ & all ones matrix of size $n$ \\
\hline
$\F{I}_n$ & identity matrix of size $n$ & $\F{C}_{n,m}$ & matrix $\F{C}$ of size $n \times m$ & $\F{C}_n$ & $n$-th row of matrix $\F{C}$ \\
\hline
$\bmone_n$ & $n$-dimensional all ones column vector & $\bmzer_n$ & $n$-dimensional all zeros column vector & $\F{A}^{\top}$ or $\trp{\F{A}}$ & transpose of matrix $\F{A}$ \\
\hline
$\Diag{\bmv}$ & diagonal matrix with $\bmv$ on the diagonal & $\resh{m}{n}{\F{A}}$ & reshape $\F{A}$ into an $m \times n$ matrix & $\reshs{n}{\F{A}}$ & reshape $\F{A}$ into a square matrix of size $n$ \\
\hline
$\otimes$ & Kronecker product & $\odot$ & Hadamard product & $\F{A}_{(r)}$ & $r$-times Hadamard product of $\F{A}$ \\
\hline
$\EE{\F{A}}{m}$ & each entry in $\F{A}$ raised to the power $m$ & & & & \\
\hline
\end{tabular}
}%
\label{tab:symbols}
\caption*{\textit{Remark: A vector is represented in print by a bold italicized symbol while a two-dimensional matrix is represented by a bold symbol, except for a row vector whose elements form a certain row of a matrix where we represent it in bold symbol.}}
\end{table*}
\section{Introduction}
\label{Int}
Fractional calculus extends classical calculus by allowing differentiation and integration of arbitrary real orders, making it a powerful tool for capturing effects like memory, long-range interactions, and anomalous diffusion, which are often observed in various scientific and engineering fields. Among the different definitions of FDs, the Caputo FD is widely used due to its compatibility with classical initial and boundary conditions. Among the numerous mathematical models described by this operator is the Bagley-Torvik equation, which is a well-known fractional differential equation that involves a Caputo FD of order $1.5$. It arises in the modeling of the motion of a rigid plate immersed in a viscous fluid. The FD term represents a damping force that depends on the history of the plate's motion. This type of damping, known as fractional damping or viscoelastic damping, is often used to model materials with memory effects. 

The work on the numerical approximation of FDs and solving the Bagley-Torvik equation is currently a very active research area. In the following, we mention some of the key contributions to the numerical solution of the Bagley-Torvik equation using the Caputo FD: \textbf{Spectral Methods.} \citet{saw2019numerical} proposed a Chebyshev collocation scheme for solving the fractional Bagley-Torvik equation. The Caputo FD was handled through a system of algebraic equations formed using Chebyshev polynomials and specific collocation points. \citet{ji2020numerical} presented a numerical solution using SC polynomials. The Caputo derivative was expressed using an operational matrix of FDs, and the fractional-order differential equation was reduced into a system of algebraic equations that was solved using Newton's method. \citet{hou2020jacobi} solved the Bagley-Torvik equation by converting the differential equation into a Volterra integral equation, which was then solved using Jacobi collocation. \citet{ji2020numericalL} applied Laguerre polynomials to approximate the solution of the Bagley-Torvik equation. The Laplace transform was first used to convert the problem into an algebraic equation, and then Laguerre polynomials were used for numerical inversion. \textbf{Wavelet-Based Methods.} \citet{kaur2019non} developed a hybrid numerical method using non-dyadic wavelets for solving the Bagley-Torvik equation. \citet{dincel2021sine} employed sine-cosine wavelets to approximate the solution of the Bagley-Torvik equation, where the Caputo FD was computed using the operational matrix of fractional integration. \citet{rabiei2022numerical} introduced a wavelet-based technique, utilizing the Riemann-Liouville integral operator to transform the fractional Bagley-Torvik equation into algebraic equations. \textbf{Operational Matrix Methods.} \citet{abd2016spectral} formulated an operational matrix of FDs in the Caputo sense using Lucas polynomials, and applied Tau and collocation methods to solve Bagley-Torvik equation. \citet{youssri2017new} introduced an operational matrix approach using Fermat polynomials for solving the fractional Bagley-Torvik equation in the Caputo sense. A spectral tau method was employed to transform the problem into algebraic equations. \textbf{Galerkin Methods.} \citet{izadi2020local} used a local discontinuous Galerkin scheme with upwind fluxes for solving the Bagley-Torvik equation. The Caputo derivative was approximated by discretizing element-wise systems. \citet{chen2020fast} proposed a fast multiscale Galerkin algorithm using orthogonal functions with vanishing moments. \textbf{Spline and Finite Difference Methods.} \citet{tamilselvan2023second} used a second-order spline approximation for the Caputo FD and a central difference scheme for the second-order derivative term in solving the Bagley-Torvik equation. \textbf{Artificial Intelligence-Based Methods.} \citet{verma2021numerical} employed an artificial neural network method with Legendre polynomials to approximate the solution of the Bagley-Torvik equation, where the Caputo derivative was handled through an optimization-based training process.

This work introduces a novel framework for approximating Caputo FDs of any positive orders using a SGPS method. Unlike traditional approaches, our method employs a strategic change of variables to transform the Caputo FD into a scaled integral of the $m$th-derivative of the Lagrange interpolating polynomial, where $m$ is the ceiling of the fractional order $\alpha$. This transformation mitigates the singularity inherent in the Caputo derivative near zero, thereby improving numerical stability and accuracy. The numerical approximation of the Caputo FD is finally furnished by linking the $m$th-derivative of SG polynomials with another set of SG polynomials of lower degrees and higher parameter values whose integration can be recovered within excellent accuracies using SG quadratures. By employing orthogonal collocation and SG quadratures in barycentric form, we achieve a highly accurate and computationally efficient scheme for solving fractional differential equations under optimal parameter settings. Furthermore, we provide a rigorous error analysis showing that the SGPS method is convergent when implemented within a semi-analytic framework, where all necessary integrals are computed analytically, and is conditionally convergent with an exponential rate of convergence for sufficiently smooth functions when performed using finite-precision arithmetic. This exponential convergence generally leads to superior accuracy compared to existing wavelet-based, operational matrix, and finite difference methods. We conduct a rigorous error and convergence analyses to derive the total truncation error bound of the method and study its asymptotic behavior within double-precision arithmetic. The SGPS is highly flexible in the sense that the SG parameters associated with SG interpolation and quadratures allow for flexibility in adjusting the method to suit different types of problems. These parameters influence the clustering of collocation and quadrature points and can be tuned for optimal performance. A key contribution of this work is the development of the FSGIM. This matrix facilitates the direct computation of Caputo FDs through efficient matrix-vector multiplications. Notably, the FSGIM is constant for a given set of points and parameter values. This allows for pre-computation and storage, significantly accelerating the execution of the SGPS method. The SGPS method avoids the need for extended precision arithmetic, as it remains within the limits of double-precision computations, making it computationally efficient compared to methods that require high-precision arithmetic. The current approach is designed to handle any positive fractional order $\alpha$, making it more flexible than some existing methods that are constrained to specific fractional orders. The efficacy of our approach is demonstrated through its application to Caputo fractional TPBVPs of the Bagley-Torvik type, where it outperforms existing numerical schemes.

The remainder of this paper is structured as follows. Section \ref{sec:SGPS1} introduces the SGPS method, providing a detailed exposition of its theoretical framework and numerical implementation. The computational complexity of the derived FSGIM is discussed in Section \ref{sec:CC20251}. A comprehensive error analysis of the method is carried out in Section \ref{sec:ESA1}, establishing its convergence properties and providing insights into its accuracy. In Section \ref{sec:PS}, we demonstrate the effectiveness of the SGPS method through a case study, focusing on its application to Caputo fractional TPBVPs of the Bagley-Torvik type. Section \ref{sec:CRAC1} presents a series of numerical examples, demonstrating the superior performance of the SGPS method in comparison to existing techniques. Finally, Section \ref{sec:Conc} concludes the paper with a summary of key findings and a discussion of potential future research directions. Tables \ref{tab:symbols} and \ref{tab:acronyms} in \ref{sec:LVA1} display the symbols and acronyms used in the paper and their meanings. \ref{sec:MP1} supports the error analysis conducted in Section \ref{sec:ESA1} by providing rigorous mathematical justifications for the asymptotic order of some key terms in the error bound.

\section{The SGPS Method}
\label{sec:SGPS1}
This section introduces the SGPS method for approximating Caputo fractional derivatives. Readers interested in a deeper understanding of Gegenbauer and SG polynomials, as well as their associated quadratures, are encouraged to consult \cite{Elgindy20161,Elgindy20171,elgindy2018optimal,elgindy2018high}. 

Let $\alpha \in \MBRP\backslash\MBZP, m = \left\lceil  \alpha  \right\rceil, f \in \ME{H}^{m,2}(\FOmega_1), \left\{\hx_{n,0:n}^{\lambda}\right\} = \MBSG_n^{\lambda}$, and consider the following SGPS interpolant of $f$:
\begin{equation}\label{sec:ort:eq:Lagint1}
	{I_n}f(x) = f_{0:n}^{\top}\,\C{L}_{0:n}^{\lambda}\cancbra{x},
\end{equation}
where $\C{L}_{k}^{\lambda}(x)$ is the $n$th-degree Lagrange interpolating polynomial in modal form defined by
\begin{equation}\label{sec:ort:eq:Lag1}
	\C{L}_k^{\lambda}(x) = \hvarpi_k^{\lambda}\,{\trp{{\hlambdabar_{0:n}^{\lambda}}^{\hspace{-1mm} \div}}\,\left(\hG_{0:n}^{\lambda}\cancbra{\hx_{n,k}^{\lambda}} \odot \hG_{0:n}^{\lambda}\cancbra{x}\right)},\quad \forall k \in \MBJP_n,
\end{equation}
${\hlambdabar}_{0:n}^{\lambda}$ and $\hvarpi_{0:n}^{\lambda}$ are the normalization factors for SG polynomials and the Christoffel numbers associated with their quadratures, respectively:
\begin{gather}
\hlambdabar_j^{\lambda} = \frac{{\pi {2^{1 - 4\lambda }}\Gamma \left( {j + 2\lambda } \right)}}{{j!{\Gamma ^2}\left( \lambda  \right)\left( {j + \lambda} \right)}},\\
\hvarpi_k^{\lambda} = 1/\left[{\trp{{\hlambdabar_{0:n}^{\lambda}}^{\hspace{-1mm} \div}}\,\left(\hG_{0:n}^{\lambda}\cancbra{\hx_{n,k}^{\lambda}}\right)_{(2)}}\right],
\end{gather}
$\forall j, k \in \MBJP_n$; cf. \cite[Eqs. (2.6), (2.7), (2.10), and (2.12)]{Elgindy20161}. The matrix form of Eq. \eqref{sec:ort:eq:Lag1} can be stated as:
\begin{equation}\label{eq:matf1}
\C{L}_{0:n}^{\lambda}\cancbra{x} = \Diag{\hvarpi_{0:n}^{\lambda}}\,\left(\hG_{0:n}^{\lambda}\cancbra{x \bmone_{n+1}} \odot \hG_{0:n}^{\lambda}\cancbra{\hbmx_n^{\lambda}}\right)^{\top} {\hlambdabar_{0:n}^{\lambda}}^{\hspace{-1mm} \div}.
\end{equation}
Eq. \eqref{sec:ort:eq:Lagint1} allows us to approximate the Caputo FD of $f$:
\begin{equation}\label{sec:ort:eq:Lagint1_2}
	\CapD{x}{\alpha}{f} \approx \CapD{x}{\alpha}{I_nf} = f_{0:n}^{\top}\,\CapD{x}{\alpha}{\C{L}_{0:n}^{\lambda}}.
\end{equation} 
To accurately evaluate $\CapD{x}{\alpha}{\C{L}_{0:n}^{\lambda}}$, we apply the following\\ $m$-dependent change of variables:
\begin{equation}\label{eq:Elg1}
\tau = x \left(1-y^{\frac{1}{m-\alpha}}\right),
\end{equation}
which magically reduces $\CapD{x}{\alpha}{f}$ into a scalar multiple of the integral of the $m$th-derivative of $f$ on the fixed interval $\FOmega_1$, denoted by $\CapDE{x}{\alpha}{f}$, and defined by
\begin{equation}\label{eq:Henm1}
\CapDE{x}{\alpha}{f} = \frac{x^{m-\alpha}}{\Gamma(m-\alpha+1)} \C{I}_1^{(y)} f^{(m)}\cancbra{x\left(1-y^{\frac{1}{m-\alpha}}\right)}.
\end{equation}
It is easy here to show that the value of $x \left(1 - y^{\frac{1}{m-\alpha}}\right)$ will always lie in the range $\FOmega_x\,\forall\,0 \le x, y \le 1$. Combining Eqs. \eqref{sec:ort:eq:Lagint1_2} and \eqref{eq:Henm1} gives
\begin{equation}\label{eq:Soly1}
\CapD{x}{\alpha}{f} \approx \frac{x^{m-\alpha}}{\Gamma(m-\alpha+1)} f_{0:n}^{\top}\,\C{I}_1^{(y)} {\C{L}_{0:n}^{\lambda, m}}\cancbra{x\left(1- y^{\frac{1}{m-\alpha}}\right)},
\end{equation}
where $\C{L}_j^{\lambda, m}$ denotes the $m$th-derivative of $\C{L}_j^{\lambda}\,\forall j \in \MBJP_n$. Substituting Eq. \eqref{eq:matf1} into Eq. \eqref{eq:Soly1} yields
\begin{gather}
\CapD{x}{\alpha}{f} \approx \frac{x^{m-\alpha}}{\Gamma(m-\alpha+1)}\,\left[\trp{{\hlambdabar_{m:n}^{\lambda}}^{\hspace{-1mm} \div}} \times \right.\\
\left. \left(\C{I}_1^{(y)} {\hG_{m:n}^{\lambda, m}\cancbra{\left(x-x y^{\frac{1}{m-\alpha}}\right) \bmone_{n+1}} \odot \hG_{m:n}^{\lambda}\cancbra{\hbmx_n^{\lambda}}}\right) \Diag{\hvarpi_{0:n}^{\lambda}}\right] f_{0:n},\\\label{eq:Soly1_2}
\end{gather}
where $\hG_j^{\lambda, m}$ denotes the $m$th-derivative of $\hG_j^{\lambda}\,\forall j \in \MBN_{m,n}$. 

To efficiently evaluate Caputo FDs at an arbitrary set of points, $z_{0:M} \in \FOmega_1\,\foralls M \in \MBZzerP$, we can sequentially apply Formula \eqref{eq:Soly1_2} within a loop. While direct implementation using a loop over the vector's elements of $\bmz_M$ is possible, employing matrix operations is highly recommended for substantial performance gains. To this end, notice first that Eq. \eqref{eq:matf1} can be rewritten at $\bmz_M$ as:
\begin{gather}
\C{L}_{0:n}^{\lambda}\cancbra{\bmz_M} = \reshCB{n+1}{M+1}{\trp{{\hlambdabar_{0:n}^{\lambda}}^{\hspace{-1mm} \div}} \times\right.\\
\left. \left(\hG_{0:n}^{\lambda}\cancbra{\bmz_M \otimes \bmone_{n+1}} \odot \hG_{0:n}^{\lambda}\cancbra{\bmone_{M+1} \otimes \hbmx_n^{\lambda}}\right) \times\right.\\
\left. \left(\F{I}_{M+1} \otimes \Diag{\hvarpi_{0:n}^{\lambda}}\right)}.\label{eq:Lagmat1}
\end{gather}
Eq. \eqref{eq:Lagmat1} together with \eqref{eq:Soly1} yield:
\begin{equation}\label{eq:mfkfj1}
\CapD{\bmz_M}{\alpha}{f} \approx \frac{1}{\Gamma(m-\alpha+1)}\,\left[\EE{\bmz_M}{(m-\alpha)} \odot \left(\hCapIM{n}{\alpha}\,f_{0:n}\right)\right],
\end{equation}
where 
\begin{gather}\label{eq:Kimokono1}
\hCapIM{n}{\alpha} = \reshTCB{n+1}{M+1}{\trp{{\hlambdabar_{m:n}^{\lambda}}^{\hspace{-1mm} \div}} \times\right.\\
\left. \left(\C{I}_1^{(y)} {\hG_{m:n}^{\lambda, m}\cancbra{\bmz_M \otimes \left(\left(1-y^{\frac{1}{m-\alpha}}\right) \bmone_{n+1}\right)} \odot \hG_{m:n}^{\lambda}\cancbra{\bmone_{M+1} \otimes \hbmx_n^{\lambda}}}\right) \times\right.\\
\left. \left(\F{I}_{M+1} \otimes \Diag{\hvarpi_{0:n}^{\lambda}}\right)}.
\end{gather}
With a simple algebraic manipulation, we can show further that Eq. \eqref{eq:mfkfj1} can be rewritten as
\begin{equation}\label{eq:mfkfj2}
\CapD{\bmz_M}{\alpha}{f} \approx \CapIM{n}{\alpha}\,f_{0:n},
\end{equation}
where 
\begin{equation}\label{eq:nnnnnn1}
\CapIM{n}{\alpha} = \frac{1}{\Gamma(m-\alpha+1)} \Diag{\EE{\bmz_M}{(m-\alpha)}} \hCapIM{n}{\alpha}.
\end{equation}
We refer to the $(M+1) \times (n+1)$ matrix $\CapIM{n}{\alpha}$ by \textit{``the $\alpha$th-order FSGIM,''} which approximates Caputo FD at the points $z_{0:M}$ using an $n$th-degree SG interpolant. We also refer to $\hCapIM{n}{\alpha}$ by the \textit{``$\alpha$th-order FSGIM Generator''} for an obvious reason. Although the implementation of Formula \eqref{eq:mfkfj2} is straightforward, Formula \eqref{eq:mfkfj1} is more stable and efficient computationally, with fewer arithmetic operations, particularly because it avoids constructing a diagonal matrix and directly applies the elementwise multiplication after the matrix-vector product. Note that for $M = 0$, Formulas \eqref{eq:mfkfj1} and \eqref{eq:mfkfj2} reduce to \eqref{eq:Soly1_2}. 

It remains now to show how to compute 
\[\C{I}_1^{(y)} {\hG_j^{\lambda, m}\cancbra{x\left(1- y^{\frac{1}{m-\alpha}}}\right)},\quad \foralla j \in \MBN_{m:n},\quad x \in \FOmega_1,\]
effectively. Notice first that although the integrand is defined in terms of a polynomial in $x$, the integrand itself is not a polynomial in $y$, since $1/(m-\alpha)$ is not an integer for $\alpha \in \MBRP\backslash\MBZP$. Therefore, when trying to evaluate the integral symbolically, the process can be very challenging and slow. Numerical integration, on the other hand, is often more practical for such integrals because it can achieve any specified accuracy by evaluating the integrand at discrete points without requiring closed-form antiderivatives or algebraic complications. Our faithful tool for this task shall be the SGIM; cf.  \cite{Elgindy20161,elgindy2018optimal,elgindy2025fourier} and the references therein. The SGIM utilizes the barycentric representation of shifted Lagrange interpolating polynomials and their associated barycentric weights to approximate definite integrals effectively through matrix–vector multiplications. The constructed SGPS quadratures by these operations extend the classical Gegenbauer quadrature methods and can improve their performance in terms of convergence speed and numerical stability. An efficient way to construct the SGIM is to premultiply the corresponding GIM by half, rather than shifting the quadrature nodes, weights, and Lagrange polynomials to the target domain $\FOmega_1$, as shown earlier in \cite{Elgindy20161}. In the current work, we only need the GIRV, $\F{P}$, which extends the applicability of the barycentric GIM to include the boundary point $1$; cf. \cite[Algorithm 6 or 7]{Elgindy20171}. The associated SGIRV, $\hFP$, can be directly generated through the formula:
\begin{equation}\label{eq:SG123}
\hFP = \frac{1}{2} \F{P}.
\end{equation}
Given that the construction of $\hFP$ is independent of the SGPS interpolant \eqref{sec:ort:eq:Lagint1}, we can define $\hFP$ using any set of SGG quadrature nodes $\MBSG_{n_q}^{\lambda_q}\,\foralls n_q \in \MBZzerP, \lambda_q > -1/2$. This flexibility enables us to enhance the accuracy of the required integrals without being constrained by the resolution of the interpolation grid. With this strategy, the SGIRV provides a convenient way to approximate the required integral through the following matrix-vector multiplication:
\begin{equation}\label{eq:Solynn1}
\C{I}_1^{(y)} {\hG_j^{\lambda, m}\cancbra{x-x y^{\frac{1}{m-\alpha}}}} \approx \hFP\,\hG_j^{\lambda, m}\left(x\left(1-\EE{\left(\hbmx_{n_q}^{\lambda_q}\right)}{\frac{1}{m-\alpha}}\right)\right),
\end{equation}
$\foralla j \in \MBN_{m:n}, x \in \FOmega_1$. We refer to a quadrature of the form \eqref{eq:Solynn1} by the $(n_q,\lambda_q)$-SGPS quadrature. A remarkable property of Gegenbauer polynomials (and their shifted counterparts) is that their derivatives are essentially other Gegenbauer polynomials, albeit with different degrees and parameters, as shown by the following theorem.

\begin{thm}\label{thm:1}
The $m$th-derivatives of the $n$th-degree, $\lambda$-indexed, Gegenbauer and SG polynomials are given by
\begin{subequations}
\begin{gather}
G_n^{\lambda,m}(x) = \chi_{n,m}^{\lambda} G_{n-m}^{\lambda+m}(x),\label{eq:mnmnmnm12}\\
\hG_n^{\lambda,m}(\hx) = \hchi_{n,m}^{\lambda} \hG_{n-m}^{\lambda+m}(\hx),\label{eq:mnmnmnm13}
\end{gather}
\end{subequations}
where
\begin{gather}
\chi_{n,m}^{\lambda} = \frac{{{2^m}n!\Gamma \left( {2\lambda } \right){{\left( \lambda  \right)}_m}\Gamma \left( {m + n + 2\lambda } \right)}}{{\left( {n - m} \right)!\Gamma \left( {2\left( {m + \lambda } \right)} \right)\Gamma \left( {n + 2\lambda } \right)}},\\
\hchi_{n,m}^{\lambda} = 2^m \chi_{n,m}^{\lambda} = \frac{n! \Gamma(\lambda+1/2) \Gamma(n+m+2 \lambda)}{(n-m)! \Gamma(n+2 \lambda) \Gamma(m+\lambda+1/2)},\\\label{eq:hhkk1}
\end{gather}
$\forall n \ge m, x \in \FOmega_{-1,1}$, and $\hx \in \FOmega_1$.
\end{thm}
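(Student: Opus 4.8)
The plan is to establish the differentiation formula \eqref{eq:mnmnmnm12} for the ordinary Gegenbauer polynomials on $\FOmega_{-1,1}$ first, and then transfer it to the shifted setting \eqref{eq:mnmnmnm13} by the affine change of variable $\hx \mapsto 2\hx - 1$ together with the identity $\hG_n^{\lambda}(\hx) = G_n^{\lambda}(2\hx-1)$. For the ordinary case I would proceed by induction on $m$. The base case $m=1$ is the classical single-differentiation rule $\frac{d}{dx} G_n^{\lambda}(x) = 2\lambda\, G_{n-1}^{\lambda+1}(x)$, which is standard (it follows, e.g., from the hypergeometric or generating-function representation of $G_n^{\lambda}$, or from the Rodrigues formula); this furnishes $\chi_{n,1}^{\lambda} = 2\lambda$, which is exactly what the closed form for $\chi_{n,m}^{\lambda}$ gives at $m=1$ after simplifying $\Gamma(2\lambda)(\lambda)_1\Gamma(1+n+2\lambda)/\big[(n-1)!\,\Gamma(2\lambda+2)\,\Gamma(n+2\lambda)\big] = 2\lambda\cdot\frac{n}{1}\cdot\frac{1}{\,2\lambda(2\lambda+1)\,}\cdot(n+2\lambda)$ — wait, one should just verify it reduces to $2\lambda$ cleanly, which it does once the Pochhammer and Gamma factors are collected.

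For the inductive step, assume $G_n^{\lambda,m}(x) = \chi_{n,m}^{\lambda}\, G_{n-m}^{\lambda+m}(x)$. Differentiating once more and applying the $m=1$ rule \emph{to the polynomial $G_{n-m}^{\lambda+m}$ with index $\lambda+m$} gives
\begin{equation}
G_n^{\lambda,m+1}(x) = \chi_{n,m}^{\lambda}\cdot 2(\lambda+m)\, G_{n-m-1}^{\lambda+m+1}(x),
\end{equation}
so it remains to check the recurrence $\chi_{n,m+1}^{\lambda} = 2(\lambda+m)\,\chi_{n,m}^{\lambda}$ against the claimed closed form. This is a routine ratio computation: from the formula for $\chi_{n,m}^{\lambda}$, the factor $2^m \to 2^{m+1}$ contributes a $2$, the Pochhammer $(\lambda)_m \to (\lambda)_{m+1}$ contributes $(\lambda+m)$, the term $(n-m)! \to (n-m-1)!$ contributes a factor $(n-m)$, and the two Gamma quotients $\Gamma(m+n+2\lambda)/\Gamma(2(m+\lambda)) \to \Gamma(m+1+n+2\lambda)/\Gamma(2(m+1+\lambda))$ contribute $(m+n+2\lambda)/\big[(2m+2\lambda)(2m+2\lambda+1)\big]$; multiplying these together and simplifying must collapse to $2(\lambda+m)$, which pins down the induction. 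I would present this ratio check compactly rather than expanding every Gamma function.

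Finally, for \eqref{eq:mnmnmnm13}: writing $\hx = (x+1)/2$, the chain rule gives $\hG_n^{\lambda,m}(\hx) = 2^m\, G_n^{\lambda,m}(2\hx-1) = 2^m \chi_{n,m}^{\lambda}\, G_{n-m}^{\lambda+m}(2\hx-1) = 2^m\chi_{n,m}^{\lambda}\,\hG_{n-m}^{\lambda+m}(\hx)$, so $\hchi_{n,m}^{\lambda} = 2^m\chi_{n,m}^{\lambda}$ as asserted; the alternative closed form in \eqref{eq:hhkk1} then follows by inserting the duplication formula $\Gamma(2\lambda) = \frac{2^{2\lambda-1}}{\sqrt{\pi}}\Gamma(\lambda)\Gamma(\lambda+1/2)$ (and likewise for $\Gamma(2(m+\lambda))$) into $2^m\chi_{n,m}^{\lambda}$ and cancelling the powers of $2$ and the $(\lambda)_m = \Gamma(\lambda+m)/\Gamma(\lambda)$ factor. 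The only real obstacle is bookkeeping: making sure the single-differentiation rule is applied with the \emph{correct} index at each inductive step and that the Gamma/Pochhammer ratio genuinely telescopes to the claimed constant; there is no conceptual difficulty, only the risk of an algebraic slip, so I would double-check the recurrence $\chi_{n,m+1}^{\lambda}/\chi_{n,m}^{\lambda} = 2(\lambda+m)$ numerically for a couple of small $(n,m,\lambda)$ triples as a sanity check before committing to the write-up.
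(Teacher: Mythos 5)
Your overall architecture (induction on $m$ for the unshifted polynomials, then the chain rule with the factor $2^m$ for the shifted case, then the duplication formula to obtain \eqref{eq:hhkk1}) mirrors the paper's, and the last two steps are correct. The gap is in the base case, and it propagates through the induction. The rule $\Part{x}C_n^{\lambda}(x)=2\lambda\,C_{n-1}^{\lambda+1}(x)$ holds for the Gegenbauer polynomials in Szeg\H{o}'s standardization $C_n^{\lambda}$, \emph{not} for the paper's $G_n^{\lambda}$, which are normalized by their value at $x=1$ (i.e.\ $G_n^{\lambda}=C_n^{\lambda}/C_n^{\lambda}(1)$). For the normalized polynomials the one-step rule picks up the ratio $C_{n-1}^{\lambda+1}(1)/C_n^{\lambda}(1)$, giving
\begin{equation}
\Part{x}G_n^{\lambda}(x)=\frac{n(n+2\lambda)}{2\lambda+1}\,G_{n-1}^{\lambda+1}(x),
\end{equation}
and indeed the theorem's closed form yields $\chi_{n,1}^{\lambda}=n(n+2\lambda)/(2\lambda+1)$, not $2\lambda$. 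Your own $m=1$ check, carried out honestly, does not ``reduce to $2\lambda$ cleanly''; the leftover factor is exactly $C_{n-1}^{\lambda+1}(1)/C_n^{\lambda}(1)$. Likewise the claimed recurrence $\chi_{n,m+1}^{\lambda}=2(\lambda+m)\chi_{n,m}^{\lambda}$ is false: the very ratio you list collapses to $(n-m)(n+m+2\lambda)/(2\lambda+2m+1)$, which equals $\chi_{n-m,1}^{\lambda+m}$ --- consistent with applying the \emph{corrected} one-step rule to $G_{n-m}^{\lambda+m}$, but not with $2(\lambda+m)$. The numerical sanity check you propose at the end would have flagged this immediately.

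The repair is either (a) run your induction with the corrected one-step rule $\Part{x}G_n^{\lambda}=\chi_{n,1}^{\lambda}\,G_{n-1}^{\lambda+1}$ and verify the telescoping $\chi_{n,m+1}^{\lambda}=\chi_{n,m}^{\lambda}\,\chi_{n-m,1}^{\lambda+m}$, or (b) do what the paper does: prove $C_n^{\lambda,m}(x)=2^m(\lambda)_m\,C_{n-m}^{\lambda+m}(x)$ by induction on $m$ for the Szeg\H{o}-standardized polynomials, where the clean $2\lambda$ rule is valid, and only then renormalize, which produces $G_n^{\lambda,m}=2^m(\lambda)_m\bigl[C_{n-m}^{\lambda+m}(1)/C_n^{\lambda}(1)\bigr]G_{n-m}^{\lambda+m}$; the value $C_n^{\lambda}(1)=\Gamma(n+2\lambda)/\bigl(n!\,\Gamma(2\lambda)\bigr)$ then supplies precisely the extra Gamma factors appearing in $\chi_{n,m}^{\lambda}$. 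Your treatment of the shifted case and of the identity \eqref{eq:hhkk1} via the duplication formula is fine once the unshifted formula is repaired.
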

\begin{proof}
Let $C_n^{\lambda}(x)$ be the $n$th-degree, $\lambda$-indexed Gegenbauer polynomial  standardized by \citet{Szego1975}. We shall first prove that
\begin{equation}\label{eq:mnmnmnm1}
C_n^{\lambda,m}(x) = 2^m (\lambda)_m C_{n-m}^{\lambda+m}(x),\quad \forall n \ge m,
\end{equation}
where $C_j^{\lambda,m}$ denotes the $m$th-derivative of $C_j^{\lambda}\,\forall j \in \MBN_{m,n}$. To this end, we shall use the well-known derivative formula of this polynomial given by the following recurrence relation:
\[C_n^{\lambda,1}(x) = 2 \lambda C_{n-1}^{\lambda+1}(x),\quad n \ge 1.\]
We will prove Eq. \eqref{eq:mnmnmnm1} by mathematical induction on $m$. The base case $m = 1$ holds true by the given recurrence relation for the first derivative. Assume now that Eq. \eqref{eq:mnmnmnm1} holds true for $m = k$, where $k$ is an arbitrary integer such that $1 < k \le n-1$. That is,
\[C_n^{\lambda,k}(x) = 2^k (\lambda)_k C_{n-k}^{\lambda+k}(x).\]
We need to show that it also holds true for $m=k+1$. Differentiating both sides of the induction hypothesis with respect to $x$ gives:
\begin{gather*}
C_n^{\lambda,k+1}(x) = \frac{d}{dx} \left[ C_n^{\lambda,k}(x) \right] = \frac{d}{dx} \left[ 2^k (\lambda)_k C_{n-k}^{\lambda+k}(x) \right] \\
= 2^k (\lambda)_k \frac{d}{dx} \left[ C_{n-k}^{\lambda+k}(x) \right] = 2^k (\lambda)_k \cdot 2(\lambda+k)\,C_{n-k-1}^{\lambda+k+1}(x) \\
= 2^{k+1} (\lambda)_{k+1} C_{n-k-1}^{\lambda+k+1}(x).
\end{gather*}
This shows that if the formula holds for $m=k$, it also holds for $m=k+1$. By mathematical induction, Eq. \eqref{eq:mnmnmnm1} holds true for all integers $m:0 \le m \le n$. Formula \cite[(A.5)]{Elgindy201382} and the fact that 
\begin{equation}
C_n^{\lambda}(1) = \frac{\Gamma(n+2 \lambda)}{\Gamma(n+1)\,\Gamma(2 \lambda)},
\end{equation}
immediately show that
\begin{equation}\label{eq:mnmnmnm120}
G_n^{\lambda,m}(x) = 2^m (\lambda)_m \frac{C_{n-m}^{\lambda+m}(1)}{C_n^{\lambda}(1)} G_{n-m}^{\lambda+m}(x),\quad \forall n \ge m,
\end{equation}
from which Eq. \eqref{eq:mnmnmnm12} is derived. Formula \eqref{eq:mnmnmnm13} follows from \eqref{eq:mnmnmnm12} by successive application of the Chain Rule.
\end{proof}
Eqs. \eqref{eq:Solynn1} and \eqref{eq:mnmnmnm13} bring to light the sought formula:
\begin{equation}\label{eq:Solynn1nn1}
\C{I}_1^{(y)} {\hG_j^{\lambda, m}\cancbra{x-x y^{\frac{1}{m-\alpha}}}} \approx \hchi_{j,m}^{\lambda}\,\left[\hFP\,\hG_{j-m}^{\lambda+m}\left(x\left(1-\EE{\left(\hbmx_{n_q}^{\lambda_q}\right)}{\frac{1}{m-\alpha}}\right)\right)\right],
\end{equation}
$\foralla j \in \MBN_{m:n}, x \in \FOmega_1$. We denote the approximate $\alpha$th-order Caputo FD of a function at point $x$, computed using Eq. \eqref{eq:Solynn1nn1} in conjunction with either Eq. \eqref{eq:mfkfj1} or Eq. \eqref{eq:mfkfj2}, by $\CapDEN{x}{\alpha}{}$. It is interesting to notice here that the quadrature nodes involved in the computations of the necessary integrals \eqref{eq:Solynn1nn1}, which are required for the construction of the FSGIM $\CapDEN{x}{\alpha}{}$, are independent of the SGG points associated with the SGPS interpolant \eqref{sec:ort:eq:Lagint1}, and therefore, any set of SGG quadrature nodes can be used. This flexibility allows for improving the accuracy of the required integrals without being constrained by the resolution of the interpolation grid.

Figures \ref{fig:1} and \ref{fig:2} show the logarithmic absolute errors of the Caputo FD approximations of $f_1(t) = t^N$ and $f_2(t) = e^{\beta t}: \beta \in \MBRP$ using SG interpolants of various parameters and a $(15,0.5)$-SGPS quadrature. The exact Caputo FD of each function is given below:
\begin{gather*}
\CapD{t}{\alpha}{f_1} = 
\begin{cases} 
\frac{N!}{\Gamma(N+1-\alpha)} t^{N-\alpha}, & N > \alpha - 1, \\
0, & N \leq \alpha - 1,
\end{cases}\quad \forall N \in \MBZzerP, \alpha \in \MBRP,\\
\CapD{t}{\alpha}{f_2} = \mathop \sum \limits_{k = 0}^\infty  \frac{{{\beta ^{k + m}}{t^{ - \alpha  + k + m}}}}{{\Gamma \left( {k + m - \alpha  + 1} \right)}} = \beta^\alpha t^{-\alpha} E_{1, \alpha - m + 1}(\beta t).
\end{gather*}
In all plots of Fig. \ref{fig:1}, we can generally observe the rapid convergence of our PS approximations. For many cases, using an $(N+1)$st-degree Gegenbauer interpolant is often sufficient to approximate the Caputo FD of the power function $t^N$ to within machine precision. However, for certain $\lambda$ values, the error curves sometimes plateau and then exhibit a ``bouncing'' behavior. This plateau and the subsequent fluctuations are due to the accumulation of round-off errors in the PS computation as the approximation approaches ``near'' machine precision. We notice also the near same error profiles for increasing $n$, while fixing $\lambda$. In Fig. \ref{fig:2}, we find that across all parameter values of $\lambda$, the logarithmic absolute errors decrease as the Gegenbauer interpolant's degree increases. This indicates that higher degree interpolants provide more accurate approximations of the Caputo FD up to a certain level of accuracy. For low $n$ values, the errors seem to decrease as $\lambda$ decreases. For larger $n$ values, the overall errors tend to decrease until a certain degree of accuracy regardless of the $\lambda$ value, showing the effectiveness of higher degree interpolants in approximating the Caputo FD accurately. The near-linear error profile in each plot demonstrates the exponential convergence of the PS approximations with rates influenced by the parameter selections as analyzed in Section \ref{sec:ESA1}. 

\begin{figure*}[t]
\centering
\includegraphics[scale=0.275]{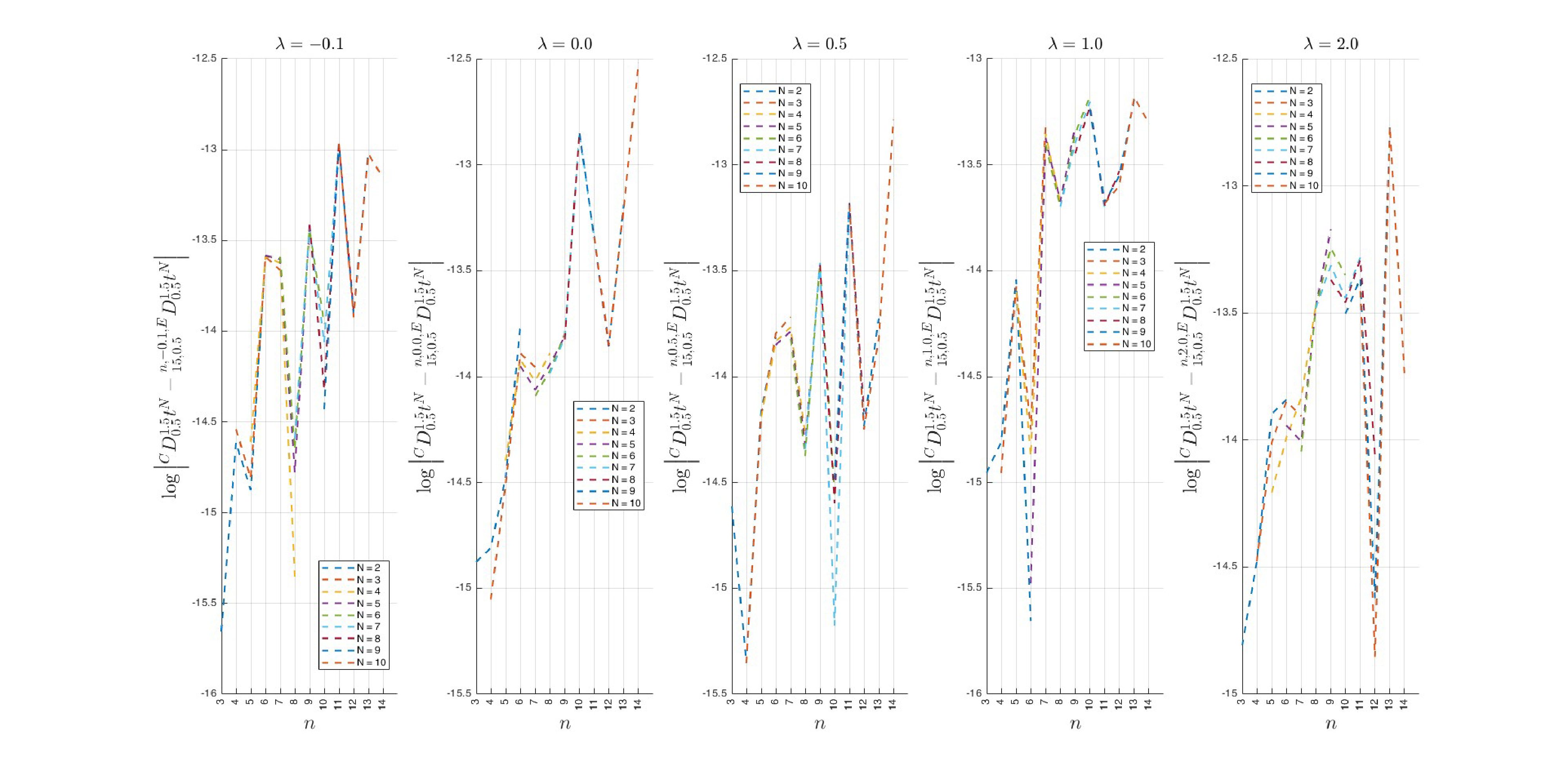}
\caption{Logarithmic absolute errors of the Caputo FD approximations of $f_1$ at $t = 0.5$, for $N = 2:10, \alpha = 1.5$, using Gegenbauer interpolants of degrees $n = N+1:N+4$ and parameter values $\lambda = -0.1, 0, 0.5, 1, 2$, together with a $(15,0.5)$-SGPS quadrature. Each curve within a plot corresponds to a different power $N$ of the function $t^N$, ranging from $2$ to $10$.}
\label{fig:1}
\end{figure*}

\begin{figure*}[t]
\centering
\includegraphics[scale=0.275]{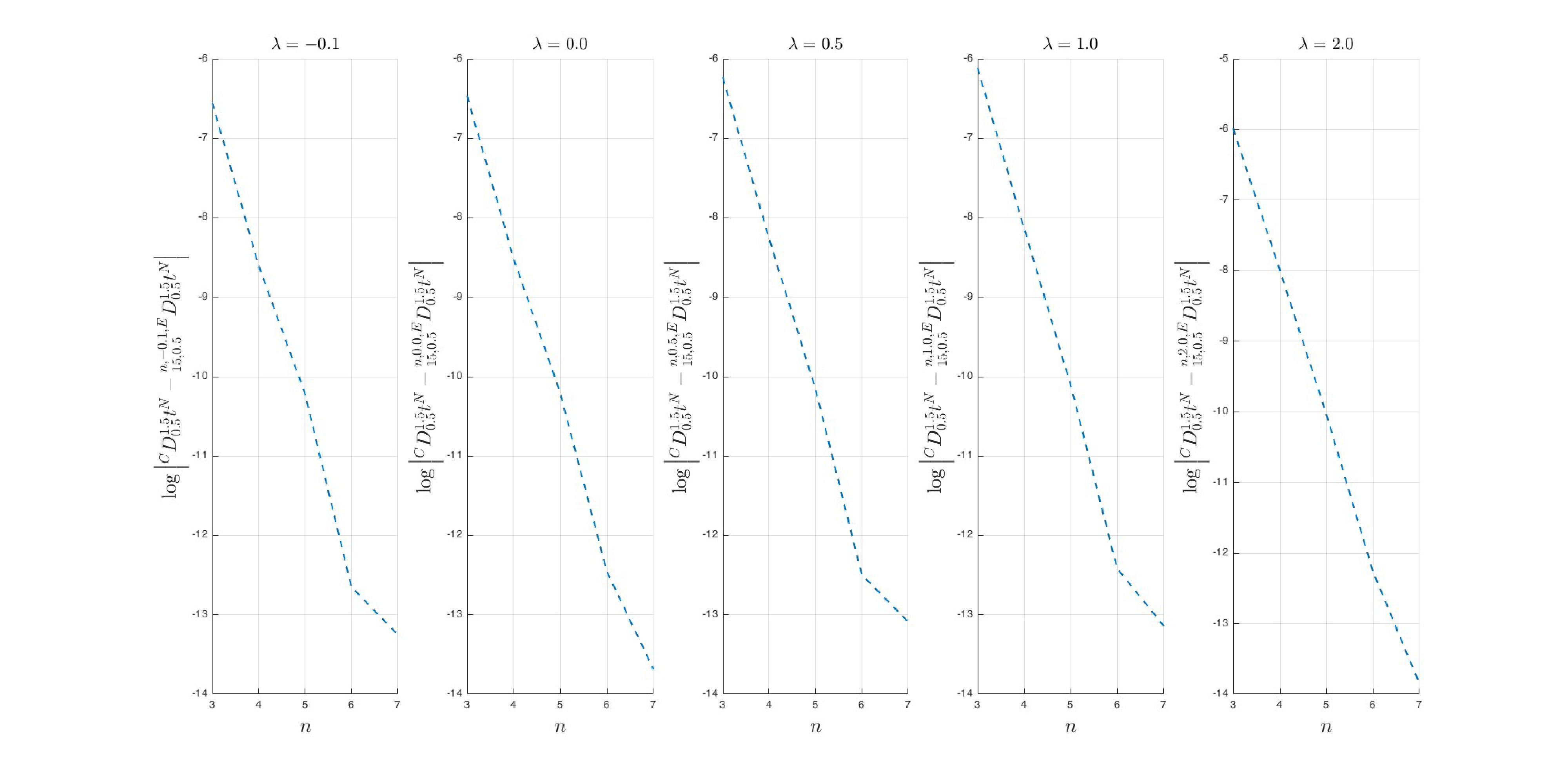}
\caption{Logarithmic absolute errors of the Caputo FD approximations of $f_2$ at $t = 0.5$, for $\beta = 0.1, \alpha = 1.5$, using Gegenbauer interpolants of degrees $n = 3:7$ and parameter values $\lambda = -0.1, 0, 0.5, 1, 2$, together with a $(15,0.5)$-SGPS quadrature.}
\label{fig:2}
\end{figure*}

\section{Computational Complexity}
\label{sec:CC20251}
In this section, we provide a computational complexity analysis of constructing $\CapIM{n}{\alpha}$, incorporating the quadrature approximation \eqref{eq:Solynn1nn1}. The analysis is based on the key matrix operations involved in the construction process, which we analyze individually in the following: Observe from Eq. \eqref{eq:nnnnnn1} that the term \(\EE{\bmz_M}{(m-\alpha)}\) involves raising each element of an \((M+1)\)-dimensional vector to the power \((m-\alpha)\), which requires \(O(M)\) operations. Constructing \(\CapIM{n}{\alpha}\) from \(\hCapIM{n}{\alpha}\) involves a diagonal scaling by \(\Diag{\EE{\bmz_M}{(m-\alpha)}}\), which requires another \(O(Mn)\) operations. The matrix \(\hCapIM{n}{\alpha}\) is constructed using several matrix multiplications and elementwise operations. For each entry of $\bmz_M$, the dominant steps include: 
\begin{itemize}
\item The computation of \(\hG_{m:n}^{\lambda}\). Using the three-term recurrence equation:
\begin{equation}
(n + 2  \alpha )  \hG_{n + 1}^{(\lambda)}(\hx) = 2  (n + \alpha )  (2\hx-1) \hG_n^{(\lambda)}(\hx) - n \hG_{n - 1}^{(\lambda)}(\hx), \end{equation}
$\forall n \in \MBZP$, starting with $\hG_0^{(\lambda)}(\hx) = 1$ and $\hG_1^{(\lambda)}(\hx) = 2 \hx - 1$, we find that each polynomial evaluation requires $O(1)$ per point, as the number of operations remains constant regardless of the value of $n$. Since the polynomial evaluation is required for polynomials up to degree $n$, this requires $O(n)$ operations per point. The computations of $\hG_{m:n}^{\lambda}\cancbra{\hbmx_n^{\lambda}}$ therefore require $O(n^2)$ operations.
\item The quadrature \eqref{eq:Solynn1nn1} involves evaluating a polynomial at transformed nodes. The cost of calculating $\hchi_{j,m}^{\lambda}$ depends on the chosen methods for computing factorials and the Gamma function. It can be considered a constant overhead for each evaluation of the Eq. \eqref{eq:hhkk1}. The computation of $\EE{\left(\hbmx_{n_q}^{\lambda_q}\right)}{\frac{1}{m-\alpha}}$ involves raising each element of the column vector $\hbmx_{n_q}^{\lambda_q}$ to the power $1/(m-\alpha)$. The cost here is linear in $(n_q+1)$, as each element requires a single exponentiation operation. Since we need to evaluate the polynomial at $n_q+1$ points, the total cost for this step is $O(n_q)$. The cost of the matrix-vector multiplication is also linear in $n_q+1$. Therefore, the computational cost of this step is $O(n_q)$ for each $j \in \MBN_{m:n}$. The overall cost, if we consider all polynomial functions involved in this step, is thus $O(n n_q)$.
\item The Hadamard product introduces another \(O(n^2)\) operations.
\item The evaluation of ${\hlambdabar_{m:n}^{\lambda}}^{\hspace{-1mm} \div}$ requires $O(n)$ operations, and the product of $\trp{{\hlambdabar_{m:n}^{\lambda}}^{\hspace{-1mm} \div}}$ by the result from the Hadamard product requires $O(n^2)$ operations.
\item The final diagonal scaling \(\Diag{\hvarpi_{0:n}^{\lambda}}\) contributes \(O(n)\).
\end{itemize}
Summing the dominant terms, the overall computational complexity of constructing \(\CapIM{n}{\alpha}\) is of $O\left(n (n + n_q)\right)$ per entry of $\bmz_M$. We therefore expect the total number of operations required to construct the matrix \(\CapIM{n}{\alpha}\) for all entries of $\bmz_M$ to be of $O\left(M n (n + n_q)\right)$.

\section{Error Analysis}
\label{sec:ESA1}
The following theorem defines the truncation error of the $\alpha$th-order SGPS quadrature \eqref{eq:mfkfj2} associated with the $\alpha$th-order FSGIM $\CapIM{n}{\alpha}$ in closed form.

\begin{thm}\label{subsec:err:thm1}
Let $n \ge m-1$ and suppose that $f \in C^{n + 1}(\FOmega_1)$ is approximated by the SGPS interpolant \eqref{sec:ort:eq:Lagint1}. Assume also that the integrals
\begin{equation}\label{eq:sdmkcmlvh1}
\C{I}_1^{(y)} {\hG_{m:n}^{\lambda, m}\cancbra{x\left(1-y^{\frac{1}{m-\alpha}}\right)}},
\end{equation}
are computed exactly $\foralla x \in \FOmega_1$. Then $\exists\,\xi = \xi(x) \in \IFOmega_1$ such that the truncation error, ${}^{\alpha}\ME{T}_{n}^{\lambda}(x, \xi)$, in the Caputo FD approximation \eqref{eq:Soly1_2} is given by
\begin{equation}\label{eq:Clev121}
{}^{\alpha}\ME{T}_{n}^{\lambda}(x, \xi) = {}^{\alpha}\eta_n^{\lambda}\,f^{(n+1)}(\xi)\,\C{I}_x^{(\tau)}{\frac{\hG_{n+1-m}^{\lambda+m}}{(x-\tau)^{\alpha+1-m}}},
\end{equation}
where 
\begin{equation}
{}^{\alpha}\eta_n^{\lambda} = \frac{{\sqrt \pi  {2^{ - 2\lambda  - 2n - 1}}\Gamma \left( {m + n + 2\lambda  + 1} \right)}}{(n - m + 1)!\,{\Gamma \left( {m - \alpha } \right)\Gamma \left( {m + \lambda  + \frac{1}{2}} \right) \Gamma \left( {n + \lambda  + 1} \right)}}.
\end{equation}
\end{thm}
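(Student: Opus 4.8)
The plan is to isolate the truncation error as a single linear functional of $f$ and then apply a Peano--mean-value argument. Because the integrals in \eqref{eq:sdmkcmlvh1} are assumed exact, the right-hand side of \eqref{eq:Soly1_2} equals $\CapD{x}{\alpha}{I_nf}$ exactly; hence, by linearity of the Caputo operator (and using $m-1<\alpha<m$),
\[
{}^{\alpha}\ME{T}_{n}^{\lambda}(x,\xi)=\CapD{x}{\alpha}{f}-\CapD{x}{\alpha}{I_nf}=\CapD{x}{\alpha}{(f-I_nf)}=\frac{1}{\Gamma(m-\alpha)}\,\C{I}_x^{(\tau)}\frac{(f-I_nf)^{(m)}}{(x-\tau)^{\alpha+1-m}},
\]
the integral converging since $m-1-\alpha\in(-1,0)$. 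I would then regard $E(g):=\CapD{x}{\alpha}{(g-I_ng)}$, for fixed $x$, as a bounded linear functional on $C^{n+1}(\FOmega_1)$ which annihilates every polynomial of degree $\le n$, because $I_n$ reproduces such polynomials.

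By the Peano kernel theorem, $E(g)=\int_0^x \C{K}_n(x,s)\,g^{(n+1)}(s)\,ds$ with $\C{K}_n(x,s)=\frac{1}{n!}\,E^{(\tau)}\bigl[(\tau-s)_+^{\,n}\bigr]$. The decisive step is to show that, for fixed $x\in\IFOmega_1$, the kernel $\C{K}_n(x,\cdot)$ has constant sign on $(0,x)$; granting this, the first mean value theorem for integrals produces a point $\xi=\xi(x)\in(0,x)\subseteq\IFOmega_1$ with $E(g)=g^{(n+1)}(\xi)\int_0^x\C{K}_n(x,s)\,ds$. Taking $g(\tau)=\tau^{n+1}$ and using that $\tau^{n+1}-I_n\tau^{n+1}$ is exactly the monic node polynomial $\hat{\omega}_{n+1}(\tau)=\prod_{k=0}^{n}\bigl(\tau-\hx_{n,k}^{\lambda}\bigr)$, this gives $\int_0^x\C{K}_n(x,s)\,ds=\frac{1}{(n+1)!}\,E\bigl((\cdot)^{n+1}\bigr)=\frac{1}{(n+1)!}\,\CapD{x}{\alpha}{\hat{\omega}_{n+1}}$, whence
\[
{}^{\alpha}\ME{T}_{n}^{\lambda}(x,\xi)=\frac{f^{(n+1)}(\xi)}{(n+1)!}\,\CapD{x}{\alpha}{\hat{\omega}_{n+1}}.
\]
In the borderline case $n=m-1$ this is immediate: $\CapD{x}{\alpha}{I_nf}=0$ and the weight $(x-\tau)^{m-1-\alpha}$ is already positive, so the classical mean value theorem applies directly to $\CapD{x}{\alpha}{f}=\frac{1}{\Gamma(m-\alpha)}\C{I}_x^{(\tau)}(x-\tau)^{m-1-\alpha}f^{(m)}$.

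It remains to evaluate $\CapD{x}{\alpha}{\hat{\omega}_{n+1}}$ and to simplify the constant. Writing $\hat{\omega}_{n+1}=\hG_{n+1}^{\lambda}/\kappa_{n+1}^{\lambda}$, where $\kappa_{n+1}^{\lambda}=4^{\,n+1}(\lambda)_{n+1}/(2\lambda)_{n+1}$ is the leading coefficient of the SG polynomial, and invoking Theorem~\ref{thm:1} with $n\to n+1$, one has $\hat{\omega}_{n+1}^{(m)}=(\hchi_{n+1,m}^{\lambda}/\kappa_{n+1}^{\lambda})\,\hG_{n+1-m}^{\lambda+m}$, so that
\[
\CapD{x}{\alpha}{\hat{\omega}_{n+1}}=\frac{1}{\Gamma(m-\alpha)}\,\C{I}_x^{(\tau)}\frac{\hat{\omega}_{n+1}^{(m)}}{(x-\tau)^{\alpha+1-m}}=\frac{\hchi_{n+1,m}^{\lambda}}{\Gamma(m-\alpha)\,\kappa_{n+1}^{\lambda}}\,\C{I}_x^{(\tau)}\frac{\hG_{n+1-m}^{\lambda+m}}{(x-\tau)^{\alpha+1-m}}.
\]
Substituting back gives ${}^{\alpha}\ME{T}_{n}^{\lambda}(x,\xi)=\dfrac{\hchi_{n+1,m}^{\lambda}}{(n+1)!\,\Gamma(m-\alpha)\,\kappa_{n+1}^{\lambda}}\,f^{(n+1)}(\xi)\,\C{I}_x^{(\tau)}\dfrac{\hG_{n+1-m}^{\lambda+m}}{(x-\tau)^{\alpha+1-m}}$, and the proof closes with a routine reduction: insert the closed form of $\hchi_{n+1,m}^{\lambda}$ from \eqref{eq:hhkk1}, write $(\lambda)_{n+1}$ and $(2\lambda)_{n+1}$ as ratios of Gamma functions, and apply the Legendre duplication formula $\Gamma(2z)=2^{2z-1}\Gamma(z)\Gamma(z+\tfrac{1}{2})/\sqrt{\pi}$; the prefactor then collapses to exactly ${}^{\alpha}\eta_n^{\lambda}$.

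The sign-definiteness of $\C{K}_n(x,\cdot)$ is the only genuinely delicate point: it is the analogue, after composition with the Caputo integral, of the classical sign property underlying the Lagrange remainder. I would try to establish it by computing $\C{K}_n$ directly from the definition of $E$ (differentiating the truncated power $(\tau-s)_+^{\,n}$ $m$ times and integrating the result against the positive weight $(x-\tau)^{m-1-\alpha}$ over $[s,x]$), or via total-positivity properties of interpolation at the SGG nodes. If a fully rigorous sign argument is not available, the stated formula can still be obtained (less rigorously) by inserting the pointwise Lagrange remainder $f-I_nf=\frac{f^{(n+1)}(\xi(\tau))}{(n+1)!}\hat{\omega}_{n+1}$ into the Caputo integral and treating $f^{(n+1)}$ as frozen at a single $\xi$.
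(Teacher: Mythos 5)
Your proposal is correct in substance and lands on exactly the paper's formula: the constant $\hchi_{n+1,m}^{\lambda}/\bigl((n+1)!\,\Gamma(m-\alpha)\,\hK_{n+1}^{\lambda}\bigr)$ does collapse to ${}^{\alpha}\eta_n^{\lambda}$ once the duplication formula is applied, and your identification of the monic node polynomial as $\hG_{n+1}^{\lambda}/\hK_{n+1}^{\lambda}$ matches the paper's use of the leading coefficient. However, you take a genuinely different and more careful route. The paper's proof is precisely the shortcut you relegate to your final sentence: it writes the Lagrange remainder with a single $\xi$, applies $\CapD{x}{\alpha}{\cdot}$ to both sides while treating $f^{(n+1)}(\xi)/\bigl((n+1)!\,\hK_{n+1}^{\lambda}\bigr)$ as a constant, and finishes by substituting Theorem \ref{thm:1}; the $\tau$-dependence of $\xi$ inside the Caputo integral is silently suppressed. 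Your Peano-kernel formulation is the honest way to extract $f^{(n+1)}$ at a single point: the functional $E$ annihilates $\mathbb{P}_n$, the evaluation $E\bigl((\cdot)^{n+1}\bigr)=\CapD{x}{\alpha}{\hat{\omega}_{n+1}}$ is right, and the remaining computation is routine. What you buy is a precise statement of the one fact that actually needs proving for the mean-value form \eqref{eq:Clev121} to hold, namely the sign-definiteness of $\C{K}_n(x,\cdot)$; what the paper buys is a two-line derivation that carries the same unacknowledged gap in the freeze-$\xi$ step. That sign-definiteness is genuinely nontrivial here (the functional involves $m$-fold differentiation of the interpolation error followed by integration against the weight $(x-\tau)^{m-1-\alpha}$, and neither operation obviously preserves the one-signedness available for the pointwise divided-difference kernel), so your flagged gap is real and is not closed by the paper either. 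Your observation that the case $n=m-1$ is the only one where the mean-value step is unconditionally valid is a useful addition.
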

\begin{proof}
The Lagrange interpolation error associated with the SGPS interpolation \eqref{sec:ort:eq:Lagint1} is given 
\begin{equation}
	f(x) = {I_n}f(x) + \frac{{{f^{(n + 1)}}(\xi )}}{{(n + 1)!\,\hK_{n + 1}^{\lambda}}}\,\hG_{n + 1}^{\lambda}(x),
\end{equation}
where $\hK_n^{\lambda}$ is the leading coefficient of the $n$th-degree, $\lambda$-indexed SG polynomial; cf. \cite[Eq. (4.12)]{Elgindy20161}. Applying Caputo FD on both sides of the equation gives the truncation error associated with Formula \eqref{eq:Soly1_2} in the following form:
\begin{gather}
{}^{\alpha}\ME{T}_{n}^{\lambda}(x, \xi) = \frac{{{f^{(n + 1)}}(\xi )}}{{(n + 1)!\,\hK_{n + 1}^{\lambda}}}\,\CapD{x}{\alpha}{\hG_{n + 1}^{(\lambda)}}\\
 = \frac{{{f^{(n + 1)}}(\xi )}}{{(n + 1)!\,\hK_{n + 1}^{\lambda}} \Gamma(m-\alpha)}\,\C{I}_x^{(\tau)} \frac{\hG_{n + 1}^{(\lambda, m)}}{(x-\tau)^{\alpha+1-m}}.\label{eq:mcbvs1}
\end{gather}
The proof is established by substituting Formula \eqref{eq:mnmnmnm13} into \eqref{eq:mcbvs1}.
\end{proof}

The following theorem marks the truncation error bound associated with Theorem \ref{subsec:err:thm1}.

\begin{thm}\label{thm:csbhvs12}
Suppose that the assumptions of Theorem \ref{subsec:err:thm1} hold true. Then the truncation error ${}^{\alpha}\ME{T}_{n}^{\lambda}(x, \xi)$ is asymptotically bounded above by
\begin{equation}\label{eq:Asymerrbdv1}
\left|{}^{\alpha}\ME{T}_{n}^{\lambda}(x, \xi)\right| \simlt\;A {\hvartheta _{m ,\lambda }}{2^{ - 2\lambda  - 2n - \frac{3}{2}}}{n^{\lambda  + m}}\quad \forallL n, 
\end{equation}
where $A = \left\|f^{(n+1)}\right\|_{L^{\infty}(\FOmega_1)}$ and 
\begin{gather}
{\hvartheta _{m,\lambda }} = \frac{1}{{\sqrt {e} }}{\left( {\lambda  + m - \frac{1}{2}} \right)^{ - \lambda  - m}} \times\\
{\left( {\left( {\lambda  + m - \frac{1}{2}} \right)\sinh \left( {\frac{1}{{\lambda  + m - \frac{1}{2}}}} \right)} \right)^{\frac{1}{4}\left( { - 2\lambda  - 2m + 1} \right)}}.
\end{gather}
\end{thm}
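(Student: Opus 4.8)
The plan is to take the closed-form truncation error of Theorem~\ref{subsec:err:thm1}, i.e. \eqref{eq:Clev121}, as the starting point and pass to absolute values, estimating its three factors separately. Since ${}^{\alpha}\eta_n^{\lambda}$ is an explicit constant, the work reduces to bounding $|f^{(n+1)}(\xi)|$, the singular integral $\C{I}_x^{(\tau)}[\hG_{n+1-m}^{\lambda+m}/(x-\tau)^{\alpha+1-m}]$, and then reading off the asymptotic order of ${}^{\alpha}\eta_n^{\lambda}$ as $n\to\infty$. The first factor is immediate: since $\xi\in\IFOmega_1$ and $f\in C^{n+1}(\FOmega_1)$, one has $|f^{(n+1)}(\xi)|\le\|f^{(n+1)}\|_{L^{\infty}(\FOmega_1)}=A$. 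For the integral, the hypothesis $m=\lceil\alpha\rceil$ with $\alpha\in\MBRP\backslash\MBZP$ forces $0<\alpha+1-m<1$, so the kernel $(x-\tau)^{m-\alpha-1}$ is integrable on $(0,x)$ with $\C{I}_x^{(\tau)}(x-\tau)^{m-\alpha-1}=x^{m-\alpha}/(m-\alpha)\le 1/(m-\alpha)$ for $x\in\FOmega_1$; combining this with the classical uniform bound $\|\hG_j^{\mu}\|_{L^{\infty}(\FOmega_1)}\le 1$ for normalized shifted Gegenbauer polynomials (a consequence of $\hG_j^{\mu}(1)=1$ and the extremal property $\max_{[-1,1]}|G_j^{\mu}|=G_j^{\mu}(1)$ for $\mu>0$, transported by $\hx\mapsto 2\hx-1$) gives $|\C{I}_x^{(\tau)}[\hG_{n+1-m}^{\lambda+m}/(x-\tau)^{\alpha+1-m}]|\le 1/(m-\alpha)$. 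These two estimates collapse \eqref{eq:Clev121} to $|{}^{\alpha}\ME{T}_n^{\lambda}(x,\xi)|\le\dfrac{A}{m-\alpha}\,{}^{\alpha}\eta_n^{\lambda}=\dfrac{A\sqrt{\pi}\,2^{-2\lambda-2n-1}\,\Gamma(n+m+2\lambda+1)}{\Gamma(m-\alpha+1)\,(n-m+1)!\,\Gamma(m+\lambda+\frac12)\,\Gamma(n+\lambda+1)}$, and the only residual $\alpha$-dependence, the factor $1/\Gamma(m-\alpha+1)$, is bounded on $(0,1)$ and hence harmless for an asymptotic ``$\simlt$'' statement.

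It then remains to convert this Gamma-quotient into the stated form $\propto\hvartheta_{m,\lambda}\,2^{-2\lambda-2n-3/2}\,n^{\lambda+m}$. The structural point is that the two factorially large factors $\Gamma(n+m+2\lambda+1)$ and $(n-m+1)!=\Gamma(n-m+2)$ have arguments symmetric about $\mu_0=n+\lambda+\frac32$ with half-gap $\nu=\lambda+m-\frac12$, so $\Gamma(n+m+2\lambda+1)/(n-m+1)!=\Gamma(\mu_0+\nu)/\Gamma(\mu_0-\nu)$, while $\Gamma(n+\lambda+1)=\Gamma(\mu_0-\frac12)$. I would feed $\Gamma(\mu_0\pm\nu)$ and $\Gamma(\mu_0-\frac12)$ through Stirling's formula with controlled remainder, group the terms $(\mu_0+\nu)^{\mu_0+\nu}(\mu_0-\nu)^{-(\mu_0-\nu)}$, and use $(1\pm\nu/\mu_0)^{\mu_0}\to e^{\pm\nu}$ together with the finer comparison expected to turn the residual exponential into $\bigl(\nu\sinh(1/\nu)\bigr)^{-\nu/2}$; the power of $2$ then becomes $2^{-2\lambda-2n-3/2}$, the power of $n$ collapses to $n^{\lambda+m}$, the $e^{-1/2}$ drops out of Stirling, and the remaining $(m,\lambda)$-constant assembles into exactly $\hvartheta_{m,\lambda}=e^{-1/2}(\lambda+m-\frac12)^{-\lambda-m}\bigl((\lambda+m-\frac12)\sinh(\tfrac{1}{\lambda+m-1/2})\bigr)^{(1-2\lambda-2m)/4}$. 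The auxiliary estimates of \ref{sec:MP1} are presumably invoked here to make these Gamma-ratio asymptotics rigorous.

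The main obstacle is precisely this final asymptotic bookkeeping: to leading order Stirling only produces the crude power $n^{\lambda+m}$, so one must retain the first correction terms carefully enough to recover the $\sinh$-factor and the $e^{-1/2}$, and must keep the half-integer shifts ($\mu_0-\frac12$, $\nu=\lambda+m-\frac12$) consistently aligned throughout. Everything upstream---integrability of the kernel, $\|\hG\|_{\infty}\le1$, boundedness of $1/\Gamma(m-\alpha+1)$, and the purely algebraic rearrangement of ${}^{\alpha}\eta_n^{\lambda}$---is routine, and in fact the resulting envelope carries considerable slack, since the exact ${}^{\alpha}\eta_n^{\lambda}$ decays super-exponentially while the asserted bound decays only like $2^{-2n}n^{\lambda+m}$.
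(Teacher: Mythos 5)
Your proposal follows essentially the same route as the paper's proof: bound $|f^{(n+1)}(\xi)|$ by $A$, bound the singular integral by $1/(m-\alpha)$ via direct integration of the kernel together with $\|\hG_{n+1-m}^{\lambda+m}\|_{L^{\infty}(\FOmega_1)}=1$ (valid since $\lambda+m>0$), and then extract the asymptotic order of ${}^{\alpha}\eta_n^{\lambda}$ from sharp Gamma-function estimates — the paper simply invokes prepackaged Batir-type inequalities that already carry the $\sinh$ corrections and the $e^{-1/2}$ rather than re-deriving them from refined Stirling as you sketch, and it neutralizes the $\alpha$-dependence by pairing $1/\Gamma(m-\alpha)$ with the $1/(m-\alpha)$ from the integral bound just as you do. Two minor notes: the appendix lemmas you mention are actually used for the later total-error theorem, not here, and your closing observation that the bound carries considerable slack (the exact ${}^{\alpha}\eta_n^{\lambda}$ decays super-exponentially) is correct.
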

\begin{proof}
Since $\lambda+m > 3/2 > 0$, \cite[Eq. (4.29a)]{Elgindy20161} shows that $\left\|\hG_{n+1-m}^{\lambda+m}\right\|_{L^{\infty}(\FOmega_1)} = 1$. Thus,
\begin{equation}\label{eq:Bestinq1}
\left|\C{I}_x^{(\tau)}{\frac{\hG_{n+1-m}^{\lambda+m}}{(x-\tau)^{\alpha+1-m}}}\right| \le \C{I}_x^{(\tau)}{(x-\tau)^{m-\alpha-1}} = \frac{x^{m-\alpha}}{m-\alpha} \le \frac{1}{m-\alpha},
\end{equation}
by the Mean Value Theorem for Integrals. Notice also that $\Gamma(z) > 1/z\,\forall z \in \IFOmega_1$. Combining this elementary inequality with the sharp inequalities of the Gamma function \cite[Inequality (96)]{elgindy2018optimal} imply
\begin{gather}
\left|{}^{\alpha}\eta_n^{\lambda}\right| < \frac{1}{{\sqrt {e} }}\left( {m - \alpha } \right){\left( {\lambda  + m - \frac{1}{2}} \right)^{ - \lambda  - m}}{2^{ - 2\lambda  - 2n - \frac{3}{2}}}{\left( {\lambda  + n} \right)^{ - \lambda  - n - \frac{1}{2}}} \times\\
{\left( {\left( {\lambda  + m - \frac{1}{2}} \right)\sinh \left( {\frac{1}{{\lambda  + m - \frac{1}{2}}}} \right)} \right)^{\frac{1}{4}\left( { - 2\lambda  - 2m + 1} \right)}}{\left( {2\lambda  + m + n} \right)^{2\lambda  + m + n + \frac{1}{2}}} \times\\
\left( {\frac{1}{{1620{{\left( {2\lambda  + m + n} \right)}^5}}} + 1} \right){\left( {\left( {\lambda  + n} \right)\sinh \left( {\frac{1}{{\lambda  + n}}} \right)} \right)^{\frac{1}{2}\left( { - \lambda  - n} \right)}} \times\\
{\left( {\left( {2\lambda  + m + n} \right)\sinh \left( {\frac{1}{{2\lambda  + m + n}}} \right)} \right)^{\lambda  + \frac{{m + n}}{2}}} \sim {\vartheta _{\alpha ,\lambda }}{2^{ - 2\lambda  - 2n - \frac{3}{2}}}{n^{\lambda  + m}}\; \forallL n,\label{eq:sauydff1}
\end{gather}
where ${\vartheta _{\alpha ,\lambda }} = (m-\alpha) {\hvartheta _{m ,\lambda }}$. 
The required asymptotic formula \eqref{eq:Asymerrbdv1} is derived by combining the asymptotic formula \eqref{eq:sauydff1} with Ineq. \eqref{eq:Bestinq1}.
\end{proof}
Since the dominant term in the asymptotic bound \eqref{eq:Asymerrbdv1} is ${2^{ - 2\lambda  - 2n - \frac{3}{2}}}$, the truncation error exhibits an exponential decay as $n \to \infty$. Notice also that increasing $\alpha$, while holding $\lambda$ fixed and keeping $n$ sufficiently large, leads to an increase in $m$, which in turn affects two factors: (i) The polynomial term $n^{\lambda + m}$ grows, which slightly slows convergence, and (ii) the prefactor ${\hvartheta _{m ,\lambda }} \sim e^{-1/2}\,m^{-(\lambda+m)}\,\forallL m$, which decreases exponentially, reducing the error; cf. Figure \ref{fig:vartheta1}. Despite the polynomial growth of the former factor, the exponential decay term \( 2^{-2n} \) dominates. Now, let us consider the effect of changing $\lambda$, while holding $\alpha$ fixed and $n$ large enough. If we increase $\lambda$ gradually, the term \( 2^{-2\lambda} \) would exhibit an exponential decay and the prefactor ${\hvartheta _{m ,\lambda }} \sim e^{-1/2} \lambda^{-(\lambda+m)}\,\forallL \lambda$ will also decrease exponentially, further reducing the error. The polynomial term $n^{\lambda + m}$, on the other hand, will increase, slightly increasing the error. 
Although the polynomial term $n^{\lambda + m}$ grows and slightly increases the error, the dominant exponential decay effects from both $2^{-2\lambda}$ and the prefactor $\hvartheta_{m,\lambda}$ ensure that the truncation error decreases significantly as $\lambda$ increases. Hence, increasing $\lambda$ leads to a faster decay of the truncation error. This analysis shows that for $\forallL n$, increasing $\alpha$ slightly increases the error bound due to polynomial growth but does not affect exponential convergence. Furthermore, increasing $\lambda$ generally improves convergence since the exponential decay dominates the polynomial growth. In fact, one can see this last remark from other two viewpoints:
\begin{enumerate}[label=(\roman*)]
\item $\forallL n/(m-1), \supp\left(\hG_{n+1-m}^{\lambda+m}\right) \to \{0, 1\}\,\forallL \lambda$, and the truncation error ${}^{\alpha}\ME{T}_{n}^{\lambda} \to 0$ accordingly.
\item $\forall \lambda \in \MBRP, \supp\left(\hG_j^{\lambda, m}\right) \to \{0, 1\}$, as $j/m \to \infty$. Consequently, the integrals \eqref{eq:sdmkcmlvh1} collapse $\forall \hG_k^{\lambda, m}: m < k \le n,\, k \gg m$, indicating faster convergence rates in the Caputo FD approximation \eqref{eq:Soly1_2}. 
\end{enumerate}
In all cases, choosing a sufficiently large $n$ ensures overall exponential convergence. It is important to note that that these observations are based on the asymptotic behavior of the error upper bound as $n \to \infty$, assuming the SGPS quadrature is computed exactly. $\forallS n$ values, any $\lambda$ value within the recommended range $[-1/2+\varepsilon, 2]$ might be optimal.

Beyond the convergence considerations mentioned above, we highlight two important numerical stability issues related to this analysis: 
\begin{enumerate}[label=(\roman*)]
\item A small buffer parameter $\varepsilon$ is often introduced to offset the instability of the SG interpolation near $\lambda = -1/2$, where SG polynomials grow rapidly for increasing orders \cite{Elgindy20161}. 
\item As $\lambda$ increases, the SGG nodes $\hx_{n,0:n}^{\lambda}$ cluster more toward the center of the interval. This means that the SGPS interpolation rule \eqref{sec:ort:eq:Lagint1} relies more on extrapolation rather than interpolation, making it more sensitive to perturbations in the function values and amplifying numerical errors. This consideration reveal that, although increasing $\lambda$ theoretically improves the convergence rate, it can introduce numerical instability due to increased extrapolation effects. Therefore, when selecting $\lambda$, one must balance convergence speed against numerical stability considerations to ensure accurate interpolation computations. This aligns well with the widely accepted understanding that, for sufficiently smooth functions and sufficiently large spectral expansion terms, the truncated expansion in the SC quadrature (corresponding to $\lambda = 0$) is optimal in the $L^{\infty}$-norm for definite integral approximations; cf. \cite{Elgindy201382} and the references therein. 
\end{enumerate}
In the following, we study the truncation error of the quadrature formula \eqref{eq:Solynn1nn1} and how does its outcomes add up to the above analysis.

\begin{figure}[ht]
\centering
\includegraphics[scale=0.45]{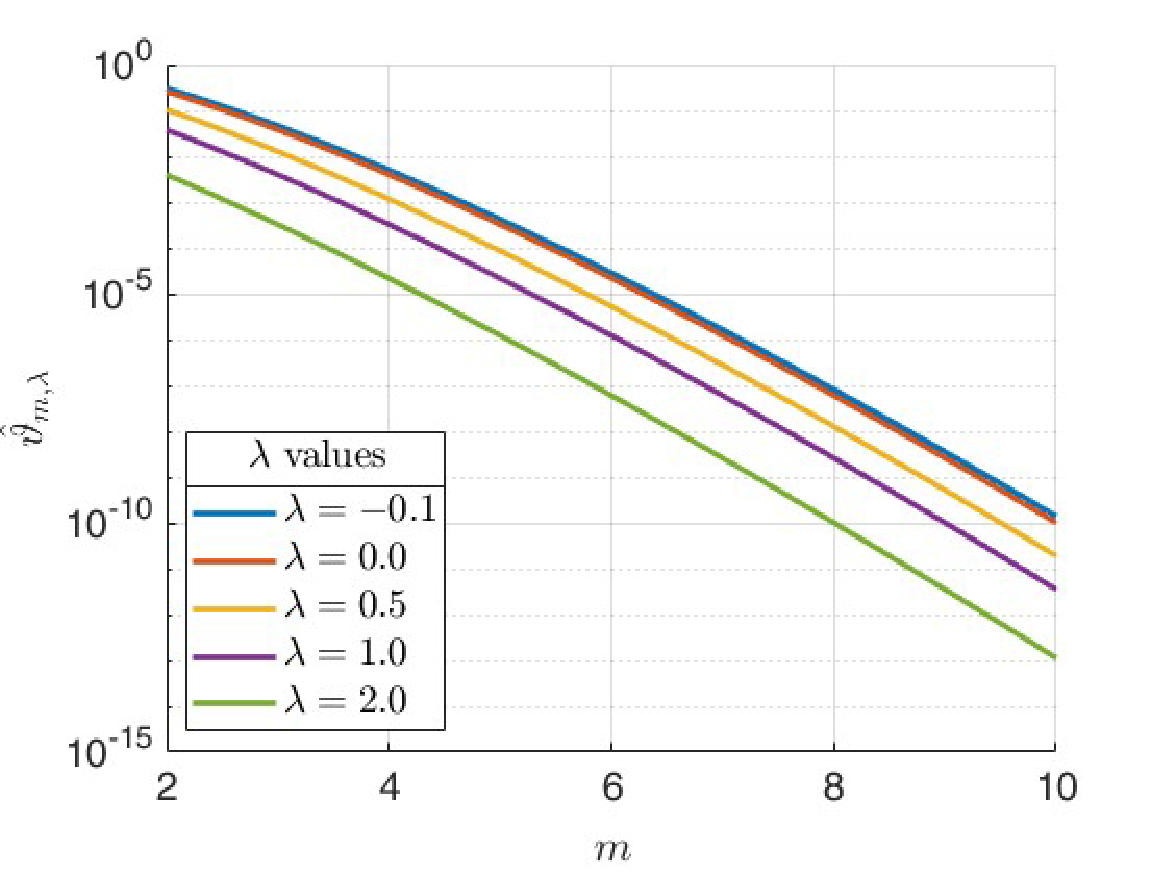}
\caption{Log-lin plots of ${\hvartheta _{m,\lambda }}$ for $\lambda = -0.1, 0, 0.5, 1, 2$, and $m = 2:10$.}
\label{fig:vartheta1}
\end{figure}

\begin{thm}\label{subsec:err:thm3}
Let $j \in \MBN_{m:n}, x \in \FOmega_1$, and assume that\\ $\hG_{j-m}^{\lambda+m}\left(x-x y^{\frac{1}{m-\alpha}}\right)$ is interpolated by the SG polynomials with respect to the variable $y$ at the SGG nodes $\hx_{n_q,0:n_q}^{\lambda_q}$. Then $\exists\,\eta = \eta(y) \in \IFOmega_1$ such that the truncation error, $\FR{T}_{j, n_q}^{\lambda_q}(\eta)$, in the quadrature approximation \eqref{eq:Solynn1nn1} is given by
\begin{gather}
\FR{T}_{j, n_q}^{\lambda_q}(\eta) = \frac{(-1)^{n_q+1} \hchi_{j-m,n_q+1}^{\lambda+m}}{(n_q+1)! \hK_{n_q+1}^{\lambda_q}} \left(\frac{x}{m-\alpha}\right)^{n_q+1} \eta^{\frac{(n_q+1) (1-m+\alpha)}{m-\alpha}} \times\\
 \hG_{j-m-n_q-1}^{\lambda+m+n_q+1}\left(x-x \eta^{\frac{1}{m-\alpha}}\right)\,\C{I}_1^{(y)} {\hG_{n_q+1}^{\lambda_q}} \cdot \MFI_{j \ge m+n_q+1}.\label{subsec:err:eq:squadki1}
\end{gather}
\end{thm}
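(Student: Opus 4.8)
The plan is to propagate the $y$-interpolation error of the integrand through the interpolatory quadrature encoded by $\hFP$, in the same spirit as the proofs of Theorems~\ref{thm:1} and~\ref{subsec:err:thm1}. By Eq.~\eqref{eq:mnmnmnm13} the integrand factors as $\hG_j^{\lambda,m}(x-xy^{1/(m-\alpha)})=\hchi_{j,m}^{\lambda}\,g(y)$ with $g(y):=\hG_{j-m}^{\lambda+m}\!\left(x-xy^{1/(m-\alpha)}\right)$, so it suffices to control the quadrature error for $\C{I}_1^{(y)}g$. Since $\hFP$ acts by replacing its argument with the values of the degree-$n_q$ SG interpolant of $g$ in the variable $y$ at the SGG nodes $\hx_{n_q,0:n_q}^{\lambda_q}$ and then integrating that interpolant exactly over $\FOmega_1$, the truncation error equals $\C{I}_1^{(y)}\bigl(g-I_{n_q}g\bigr)$.

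First I would invoke the Cauchy form of the interpolation remainder: because the nodes $\hx_{n_q,0:n_q}^{\lambda_q}$ are precisely the zeros of $\hG_{n_q+1}^{\lambda_q}$, one has $\prod_{k=0}^{n_q}(y-\hx_{n_q,k}^{\lambda_q})=\hG_{n_q+1}^{\lambda_q}(y)/\hK_{n_q+1}^{\lambda_q}$ (with $\hK_{n_q+1}^{\lambda_q}$ the leading coefficient), hence
\[
g(y)-(I_{n_q}g)(y)=\frac{g^{(n_q+1)}(\zeta(y))}{(n_q+1)!\,\hK_{n_q+1}^{\lambda_q}}\,\hG_{n_q+1}^{\lambda_q}(y),\qquad \zeta(y)\in\IFOmega_1 .
\]
Integrating over $\FOmega_1$ and applying the mean value theorem for integrals to pull out a single $\eta\in\IFOmega_1$ (equivalently, evaluating the quadrature error functional on $y^{n_q+1}$ to fix the multiplicative constant) gives
\[
\C{I}_1^{(y)}\bigl(g-I_{n_q}g\bigr)=\frac{g^{(n_q+1)}(\eta)}{(n_q+1)!\,\hK_{n_q+1}^{\lambda_q}}\,\C{I}_1^{(y)}\hG_{n_q+1}^{\lambda_q}.
\]

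It then remains to evaluate $g^{(n_q+1)}(\eta)$. Setting $u(y)=x-xy^{1/(m-\alpha)}$, so that $u'(y)=-\tfrac{x}{m-\alpha}\,y^{(1-m+\alpha)/(m-\alpha)}$, I would differentiate $g=\hG_{j-m}^{\lambda+m}(u)$ repeatedly in $y$: by Eq.~\eqref{eq:mnmnmnm13} each differentiation of $\hG_d^{\mu}(u)$ produces $\hchi_{d,1}^{\mu}\hG_{d-1}^{\mu+1}(u)\,u'$, so iterating $n_q+1$ times and telescoping the constants, $\hchi_{j-m,1}^{\lambda+m}\hchi_{j-m-1,1}^{\lambda+m+1}\cdots\hchi_{j-m-n_q,1}^{\lambda+m+n_q}=\hchi_{j-m,n_q+1}^{\lambda+m}$, the leading (pure-chain) term of $g^{(n_q+1)}(y)$ is
\[
(-1)^{n_q+1}\,\hchi_{j-m,n_q+1}^{\lambda+m}\Bigl(\tfrac{x}{m-\alpha}\Bigr)^{n_q+1}y^{\frac{(n_q+1)(1-m+\alpha)}{m-\alpha}}\,\hG_{j-m-n_q-1}^{\lambda+m+n_q+1}(u).
\]
Since $\hG_{j-m-n_q-1}^{\lambda+m+n_q+1}\equiv0$ whenever $j-m<n_q+1$ (a polynomial of degree $j-m$ differentiated $n_q+1$ times), this term already carries the factor $\MFI_{j\ge m+n_q+1}$; substituting it (with $y$ replaced by $\eta$) into the displayed error formula above reproduces~\eqref{subsec:err:eq:squadki1}.

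The hard part will be this last derivative computation. Because $1/(m-\alpha)\notin\MBZ$ for $\alpha\in\MBRP\backslash\MBZP$, the higher $y$-derivatives $u^{(r)}$, $r\ge2$, do not vanish, so Faà di Bruno's formula contributes — besides the pure-chain term — a family of terms in which $\hG_{j-m}^{\lambda+m}$ is differentiated fewer than $n_q+1$ times and multiplied by products of the $u^{(r)}$. One has to argue that these are subdominant: each extra $u^{(r)}$, $r\ge2$, trades powers of $x$ and of $\eta^{1/(m-\alpha)}$ for a lower-degree Gegenbauer factor and a lower power of $n_q$, so that~\eqref{subsec:err:eq:squadki1} captures the leading-order truncation term feeding the asymptotic bound. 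The remaining ingredients — the Cauchy remainder, the node-polynomial identity, and the constant-identification step — are routine once this derivative is available.
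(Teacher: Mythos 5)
Your proposal follows essentially the same route as the paper's proof. The paper's first step is to quote \cite[Theorem 4.1]{Elgindy20161}, which delivers exactly the identity you derive by hand: the quadrature error equals $\frac{1}{(n_q+1)!\,\hK_{n_q+1}^{\lambda_q}}\bigl[\NPart{y}{n_q+1}\hG_{j-m}^{\lambda+m}(x-xy^{1/(m-\alpha)})\bigr]_{y=\eta}\,\C{I}_1^{(y)}\hG_{n_q+1}^{\lambda_q}$ for some $\eta\in\IFOmega_1$. (Your re-derivation via the Cauchy remainder plus the mean value theorem glosses over the fact that $\hG_{n_q+1}^{\lambda_q}$ changes sign on $(0,1)$, so the weighted MVT does not apply directly; the constant-identification/Peano-kernel remark you add is the right fallback, and in any case the paper simply cites the external theorem.) The paper then, exactly as you do, computes the $(n_q+1)$-st $y$-derivative by the chain rule, converts $\hG_{j-m}^{\lambda+m,n_q+1}$ into $\hchi_{j-m,n_q+1}^{\lambda+m}\hG_{j-m-n_q-1}^{\lambda+m+n_q+1}$ via Theorem \ref{thm:1}, and appends the indicator $\MFI_{j\ge m+n_q+1}$ by the degree count you give.

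The ``hard part'' you flag is real, and it is worth saying plainly: the paper's proof does not address it. Writing $\NPart{y}{n_q+1}\hG_{j-m}^{\lambda+m}(u(y)) = (u'(y))^{n_q+1}\,\hG_{j-m}^{\lambda+m,n_q+1}(u(y))$ is only valid for $n_q=0$ or when $u$ is affine in $y$; here $u(y)=x(1-y^{1/(m-\alpha)})$ has nonvanishing higher derivatives since $1/(m-\alpha)\notin\MBZ$, so Fa\`a di Bruno contributes additional terms involving $\hG_{j-m}^{\lambda+m,k}(u)$ for $k\le n_q$ multiplied by products of $u^{(r)}$, $r\ge 2$. The paper asserts the pure-chain expression as an exact identity ``by the Chain Rule,'' whereas your version honestly labels it the leading-order term and defers the subdominance argument. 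Strictly speaking, then, your account is the more accurate one: as stated, \eqref{subsec:err:eq:squadki1} should be read as the dominant contribution to the truncation error (which is all that the subsequent asymptotic analysis in Theorem \ref{subsec:err:thm2} uses), not as a closed-form identity, unless the omitted Fa\`a di Bruno terms are shown to cancel or are absorbed into the definition of $\eta$.
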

\begin{proof}
\cite[Theorem 4.1]{Elgindy20161} immediately shows that
\begin{gather}
\FR{T}_{j, n_q}^{\lambda_q}(\eta) = \frac{1}{(n_q+1)! \hK_{n_q+1}^{\lambda_q}} \left[\NPart{y}{n_q+1} \hG_{j-m}^{\lambda+m}\left(x-x y^{\frac{1}{m-\alpha}}\right)\right]_{y=\eta} \C{I}_1^{(y)} {\hG_{n_q+1}^{\lambda_q}}\label{eq:Pr1oof1}\\
 = \frac{(-1)^{n_q+1}}{(n_q+1)! \hK_{n_q+1}^{\lambda_q}} \left(\frac{x}{m-\alpha}\right)^{n_q+1} \eta^{\frac{(n_q+1) (1-m+\alpha)}{m-\alpha}} \times\\
 \hG_{j-m}^{\lambda+m,n_q+1}\left(x-x \eta^{\frac{1}{m-\alpha}}\right)\,\C{I}_1^{(y)} {\hG_{n_q+1}^{\lambda_q}},\label{eq:Pr1oof2}
\end{gather}
by the Chain Rule. The error bound \eqref{subsec:err:eq:squadki1} is accomplished by susbtituting Formula \eqref{eq:mnmnmnm13} into \eqref{eq:Pr1oof2}. The proof is completed by realizing further that
\[\hG_{j-m}^{\lambda+m,n_q+1}\left(x-x \eta^{\frac{1}{m-\alpha}}\right) = \left.\NPart{\tau}{n_q+1} \hG_{j-m}^{\lambda+m}\left(\tau\right)\right|_{\tau = x-x \eta^{\frac{1}{m-\alpha}}} = 0,\]
$\forall j < m+n_q+1$.
\end{proof}

The truncation error analysis of the quadrature approximation \eqref{eq:Solynn1nn1} hinges on understanding the interplay between the parameters \( j, n_q, m, \lambda \), and \( \lambda_q \). While Theorem \ref{subsec:err:thm3} provides an exact expression for the error, the next theorem establishes a rigorous asymptotic upper bound, revealing how the error scales with these parameters.

\begin{thm}\label{subsec:err:thm2}
Let the assumptions of Theorem \ref{subsec:err:thm3} hold true. Then the truncation error, $\FR{T}_{j, n_q}^{\lambda_q}(\eta)$, in the quadrature approximation \eqref{eq:Solynn1nn1} is bounded above by:
\begin{gather}
\left|\FR{T}_{j, n_q}^{\lambda_q}(\eta)\right| \simlt B_m^{\lambda ,{\lambda _q}}{2^{- 2{n_q}}} {\left( {j - m - {n_q}} \right)^{ - j + m + {n_q} + \frac{1}{2}}} {j^{ - 2\lambda  - 2m + 1}} \times\\
{\left( {j + {n_q}} \right)^{j + 2\lambda  + m + {n_q} + \frac{1}{2}}}n_q^{- 2n_q  - m - \lambda  + \lambda_q - \frac{5}{2}} \left(\frac{x}{m-\alpha}\right)^{n_q+1} \eta^{\frac{(n_q+1) (1-m+\alpha)}{m-\alpha}} \times\\
\Upsilon_{D}^{\lambda_q}(n_q)\,\MFI_{j \ge m+n_q+1},\\
\label{eq:SolySofy20251}
\end{gather}
$\forallL n_q$, where 
\[\Upsilon_{D}^{\lambda_q}(n_q) = \begin{cases}
1,\quad \lambda_q \in \MBRzerP, \\
D^{\lambda_q} n_q^{-\lambda_q},\quad \lambda_q \in \MBRmhzer,\\
\end{cases}\]
$D^{\lambda_q} > 1$ is a constant dependent on ${\lambda_q}$, and $B_m^{\lambda ,{\lambda _q}}$ is a constant dependent on $m, \lambda$, and $\lambda _q$.
\end{thm}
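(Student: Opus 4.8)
The plan is to work directly from the exact error representation \eqref{subsec:err:eq:squadki1} obtained in Theorem \ref{subsec:err:thm3}, take absolute values, and estimate each of its factors in turn; the asymptotic bound \eqref{eq:SolySofy20251} then emerges after collecting the $n_q$-independent quantities and passing to the regime $n_q\to\infty$. Three groups of factors appear: (i) a Gegenbauer polynomial evaluated on $\FOmega_1$; (ii) the explicit factors $(x/(m-\alpha))^{n_q+1}$, $\eta^{(n_q+1)(1-m+\alpha)/(m-\alpha)}$, and $\MFI_{j\ge m+n_q+1}$, which already appear verbatim in the claimed bound; and (iii) the ``quadrature constant'' $\hchi_{j-m,n_q+1}^{\lambda+m}/\bigl[(n_q+1)!\,\hK_{n_q+1}^{\lambda_q}\bigr]$ multiplied by the integral $\C{I}_1^{(y)}\hG_{n_q+1}^{\lambda_q}$.

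For group (i), since $\lambda+m+n_q+1>0$, the same uniform normalization used in the proof of Theorem \ref{thm:csbhvs12}, namely $\bigl\|\hG_{j-m-n_q-1}^{\lambda+m+n_q+1}\bigr\|_{L^\infty(\FOmega_1)}=1$ (cf. \cite[Eq.~(4.29a)]{Elgindy20161}), bounds $\hG_{j-m-n_q-1}^{\lambda+m+n_q+1}\bigl(x-x\eta^{1/(m-\alpha)}\bigr)$ by $1$ because its argument lies in $\FOmega_1$; also $\MFI_{j\ge m+n_q+1}\le1$. Group (ii) is carried through unchanged, as it reproduces the corresponding factors of \eqref{eq:SolySofy20251} (note $0<m-\alpha<1$, so the $\eta$-exponent is nonnegative and that factor is in fact $\le1$, but it is retained for sharpness).

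The substantive work is group (iii). I would expand $\hchi_{j-m,n_q+1}^{\lambda+m}$ through the closed form \eqref{eq:hhkk1} into a ratio of factorials and Gamma functions in $j,m,n_q,\lambda$, write $\hK_{n_q+1}^{\lambda_q}$ in its closed form (its reciprocal supplies the $4^{-(n_q+1)}=2^{-2n_q-2}$ decay, hence the $2^{-2n_q}$ prefactor), combine with $1/(n_q+1)!$, and then apply the sharp two-sided Gamma-function inequalities of \cite[Inequality~(96)]{elgindy2018optimal} together with Stirling-type estimates, exactly as in the proof of Theorem \ref{thm:csbhvs12}. Converting each Gamma ratio to power-type factors is what produces $(j-m-n_q)^{-j+m+n_q+1/2}$ (from the factorial ratio $(j-m)!/(j-m-n_q-1)!$), $(j+n_q)^{j+2\lambda+m+n_q+1/2}$ and $j^{-2\lambda-2m+1}$ (from $\Gamma(j+n_q+1+2\lambda+m)/\Gamma(j+2\lambda+m)$ and the $j$-only corrections), and the negative power $n_q^{-2n_q-m-\lambda+\lambda_q-5/2}$ (from the residual $n_q$-Gamma's $\Gamma(n_q+\lambda+m+3/2)$, $(n_q+1)!$, $\Gamma(\lambda_q+n_q+1)$ versus $\Gamma(n_q+1+2\lambda_q)$). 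Finally the integral $\C{I}_1^{(y)}\hG_{n_q+1}^{\lambda_q}$ is estimated using the known bounds on definite integrals of SG polynomials; the decay rate of these integrals in $n_q$ changes across $\lambda_q=0$, and this is precisely the source of the piecewise factor $\Upsilon_{D}^{\lambda_q}(n_q)$ (equal to $1$ for $\lambda_q\in\MBRzerP$, and $D^{\lambda_q}n_q^{-\lambda_q}$ with $D^{\lambda_q}>1$ for $\lambda_q\in\MBRmhzer$, the latter mirroring the near-$(-1/2)$ growth of SG polynomials noted earlier). Absorbing every factor depending only on $m,\lambda,\lambda_q$ into the constant $B_m^{\lambda,\lambda_q}$ and retaining the dominant behavior as $n_q\to\infty$ yields \eqref{eq:SolySofy20251}.

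I expect the main obstacle to be the bookkeeping in the third step: several Gamma-function ratios in genuinely different arguments---some mixing $j$ and $n_q$---must be estimated simultaneously, and sharply enough that the $j$- and $n_q$-dependences separate exactly into the stated powers rather than into a coarser composite. One must also take care that the asymptotic inequality $\simlt$ holds for all large $n_q$ within the range where the indicator $\MFI_{j\ge m+n_q+1}$ is active (so that $j-m-n_q\ge1$, keeping the base $(j-m-n_q)^{-j+m+n_q+1/2}$ well-defined), and that the $\lambda_q\in\MBRmhzer$ branch of the integral estimate is matched consistently with the SG-interpolation instability near $\lambda_q=-1/2$.
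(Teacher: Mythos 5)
Your proposal is correct and follows essentially the same route as the paper: take absolute values in the exact representation from Theorem \ref{subsec:err:thm3}, bound $\hG_{j-m-n_q-1}^{\lambda+m+n_q+1}$ by $1$ via the sup-norm result \eqref{eq:sgvcmfhm1} since $\lambda+m+n_q+1>0$, expand $\hchi_{j-m,n_q+1}^{\lambda+m}/\bigl[(n_q+1)!\,\hK_{n_q+1}^{\lambda_q}\bigr]$ into the Gamma-function ratio \eqref{eq:bnmbn1}, and apply the sharp Gamma inequalities of \cite[Inequality (96)]{elgindy2018optimal}, with the piecewise factor $\Upsilon_{D}^{\lambda_q}(n_q)$ arising from the $\lambda_q$-dependent sup-norm estimate of $\hG_{n_q+1}^{\lambda_q}$ in the quadrature integral, exactly as you describe.
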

\begin{proof}
\cite[Lemma 5.1]{elgindy2018high} shows that 
\begin{equation}\label{eq:sgvcmfhm1}
 \left\| {\hG_k^{\gamma}} \right\|_{L^{\infty}(\FOmega_1)} =
\begin{cases}
1,\quad k \in \MBZzerP,\;  \gamma \in \MBRzerP, \\
\sigma^{\gamma} k^{-\gamma}, \quad \gamma \in \MBRmhzer,\; k \rightarrow \infty,\\
\end{cases}
\end{equation}
where $\sigma^{\gamma} > 1$ is a constant dependent on $\gamma$. Therefore, 
\[\left|\hG_{j-m-n_q-1}^{\lambda+m+n_q+1}\left(x-x \eta^{\frac{1}{m-\alpha}}\right)\right| \le 1,\]
since $\lambda+m+n_q+1 > 0$. Moreover, Formula \eqref{eq:hhkk1} and the definition of $\hK_{n_q}^{\lambda_q}$ \cite[see p.g. 103]{elgindy2018high} show that
\begin{gather}
\frac{\hchi_{j-m,n_q+1}^{\lambda+m}}{(n_q+1)! \hK_{n_q+1}^{\lambda_q}} = \frac{{{2^{ - 2{n_q} - 1}}\Gamma \left( {{\lambda _q} + 1} \right)\left( {j - m} \right)!}}{{\Gamma \left( {2{\lambda _q} + 1} \right)\Gamma \left( {{n_q} + 2} \right)\Gamma \left( {{n_q} + {\lambda _q} + 1} \right)}} \times\\
\frac{{\Gamma \left( {m + \lambda  + \frac{1}{2}} \right)\Gamma \left( {{n_q} + 2{\lambda _q} + 1} \right)\Gamma \left( {j + m + {n_q} + 2\lambda  + 1} \right)}}{{\Gamma \left( {j + m + 2\lambda } \right)\Gamma \left( {j - m - {n_q}} \right)\Gamma \left( {m + {n_q} + \lambda  + \frac{3}{2}} \right)}}.\\\label{eq:bnmbn1}
\end{gather}
The proof is established by applying the sharp inequalities of the Gamma function \cite[Inequality (96)]{elgindy2018optimal} on \eqref{eq:bnmbn1}.
\end{proof}

When $m \ll n_q$, the analysis of Theorem \ref{subsec:err:thm2} bifurcates into the following two essential cases:
\begin{description}
\item[Case I ($j \sim n_q$)]: Let $j = m+n_q+k+1: k = O(1), k \ll n_q$. The first few error factors in \eqref{eq:SolySofy20251} can be simplified as follows:
\begin{align*}
&{2^{- 2{n_q}}} {\left( {j - m - {n_q}} \right)^{ - j + m + {n_q} + \frac{1}{2}}} {j^{ - 2\lambda  - 2m + 1}} {\left( {j + {n_q}} \right)^{j + 2\lambda  + m + {n_q} + \frac{1}{2}}} \times\\
&n_q^{- 2n_q  - m - \lambda  + \lambda_q - \frac{5}{2}} \sim {2^{- 2{n_q}}} {\left( k+1 \right)^{-k-1/2}} {n_q^{- 2\lambda - 2m +1}} {\left( {2{n_q}} \right)^{2n_q + 2\lambda + m +\frac{1}{2}}} \times\\
&n_q^{- 2{n_q} - m - \lambda  + {\lambda _q} - \frac{5}{2}} \sim {2^{\frac{1}{2} + m + 2\lambda }}{\left( {k + 1} \right)^{ - k - \frac{1}{2}}}n_q^{ - 1 - 2m - \lambda  + {\lambda _q}}.
\end{align*}
The dominant exponential decay factor in $\sup \left|\FR{T}_{j, n_q}^{\lambda_q}(\eta)\right|$ is therefore:
\begin{equation}\label{eq:cscshgcvsh1}
\Lambda_{n_q,m}^{\alpha}(x,\eta) = \left(\frac{x}{m - \alpha}\right)^{n_q + 1} \eta^{\frac{(n_q + 1)(1 - m + \alpha)}{m - \alpha}}.
\end{equation}
This shows that the error bound decays exponentially with $n_q$ if 
\begin{equation}\label{eq:convcondver1}
\frac{x\,\eta^{\frac{1 - m + \alpha}{m - \alpha}}}{m - \alpha} < 1,
\end{equation}
is satisfied. Observe that increasing $\lambda$ accelerates the algebraic decay, driven by the polynomial term $n_q^{-1-2m-\lambda+\lambda_q}$. While increasing $\lambda_q$ can counteract this acceleration, the exponential term eventually dictates the convergence rate. Practically, to improve the algebraic decay in this case, we can increase $\lambda$ and choose $\lambda_q: \lambda_q \le \lambda+2m+1$ to prevent the polynomial term growth.
\item[Case II ($j \gg n_q$)]: By Lemma \ref{lem:hello1}, the dominant terms involving $j$ are approximately ${e^{\frac{{2\lambda {n_q}}}{j}}} j^{2 n_q + 2} \approx j^{2 n_q + 2}$. This result can also be derived from the asymptotic error bound in \eqref{eq:SolySofy20251} by observing that $j - n_q \sim j \sim j + n_q$. Thus, the dominant terms involving $j$,
\[{\left( {j - {n_q}} \right)^{- j + m + {n_q}+\frac{1}{2}}} {j^{- 2\lambda - 2m + 1}} {\left( {j + {n_q}} \right)^{j + 2\lambda + m + {n_q} + \frac{1}{2}}},
\]
reduce to approximately $j^{2n_q+2}$. Consequently, the error bound becomes
\begin{align*}
\left|\FR{T}_{j, n_q}^{\lambda_q}(\eta)\right| &\simlt B_m^{\lambda, \lambda_q} 2^{- 2n_q} j^{2n_q+2} n_q^{- 2n_q - m - \lambda + \lambda_q - \frac{5}{2}} \times\\
&\quad \left(\frac{x}{m - \alpha}\right)^{n_q + 1} \eta^{\frac{(n_q + 1)(1 - m + \alpha)}{m - \alpha}}\,\Upsilon_D^{\lambda_q}(n_q)\\
&= B_m^{\lambda, \lambda_q} \left(\frac{j^2}{4 n_q^2}\right)^{n_q} \frac{j^2}{n_q^{m + \lambda - \lambda_q + \frac{5}{2}}} \times\\
&\quad \left(\frac{x}{m - \alpha}\right)^{n_q + 1} \eta^{\frac{(n_q + 1)(1 - m + \alpha)}{m - \alpha}}\,\Upsilon_D^{\lambda_q}(n_q).
\end{align*}
The exponential decay is now governed by 
\begin{equation}\label{eq:nmb1}
\Theta_{n_q,m,j}^{\alpha}(x,\eta) = \left(\frac{j^2 x \eta^{\frac{1 - m + \alpha}{m - \alpha}}}{4 n_q^2 (m - \alpha)}\right)^{n_q},
\end{equation}
requiring
\begin{subequations}
\begin{gather}
j < n_q,\quad\text{or}\label{eq:convcondver2}\\
j = n_q\quad\text{and}\quad\frac{x \eta^{\frac{1 - m + \alpha}{m - \alpha}}}{4 (m - \alpha)} < 1,\label{eq:convcondver22}
\end{gather}
\end{subequations}
which contradict the assumption $j \gg n_q$; hence, the error diverges in this case.
\end{description}

Under the assumption $m \ll n_q$, the error analysis of Theorem \ref{subsec:err:thm2} shows that the quadrature truncation error converges only when $n_q \le n-m-k: 1 \le k \ll n_q$. In the special case when $n_q > n-m-1$, the quadrature truncation error totally collapse by Theorem~\ref{subsec:err:thm3}. In all cases, the parameter \(\lambda\) always serve as a decay accelerator, whereas \(\lambda_q\) functions as a decay brake. Notably, the observed slower convergence rate with increasing \(\lambda_q\) aligns well with the earlier finding in \cite{Elgindy201382} that selecting relatively large positive values of \(\lambda_q > 2\) causes the Gegenbauer weight function associated with the GIM to diminish rapidly near the boundaries \(x = \pm 1\). This effect shifts the focus of the Gegenbauer quadrature toward the central region of the interval, increasing sensitivity to errors and making the quadrature more extrapolatory.  

The following theorem provides an upper bound for the asymptotic total error, encompassing both the series truncation error and the quadrature approximation error in light of Theorems \ref{thm:csbhvs12} and \ref{subsec:err:thm2}. 

\begin{thm}[Asymptotic Total Truncation Error Bound]\label{thm:TotErrKKK1}
Let $m \ll n_q$, and suppose that the assumptions of Theorem \ref{subsec:err:thm1} hold true. Then the total truncation error, denoted by ${}^{\alpha}\ME{E}_{n, n_q}^{\lambda, \lambda_q}(x, \xi)$, arising from both the series truncation \eqref{sec:ort:eq:Lagint1} and the quadrature approximation \eqref{eq:Solynn1nn1}, is asymptotically bounded above by:
\begin{gather}
\left|{}^{\alpha}\ME{E}_{n, n_q}^{\lambda, \lambda_q}(x, \xi, \eta)\right| 
\simlt \,A {\hvartheta _{m ,\lambda }}{2^{ - 2\lambda  - 2n - \frac{3}{2}}}{n^{\lambda  + m}} \\
+ \frac{\varpi^{\text{upp}} B_m^{\lambda ,{\lambda _q}} x^{m-\alpha} \left\|f\right\|_{L^{\infty}(\FOmega_1)}}{\lambdabar_{\max}^{\lambda}\,\Gamma(m-\alpha+1)} {2^{- 2{n_q}}} {n^{2(1 - m - \lambda)}} n_q^{- 2{n_q} - m - \lambda  + {\lambda _q} - \frac{5}{2}} \times\\
{\left( {n - {n_q}} \right)^{- n + m + {n_q}+\frac{3}{2}}} {\left( {n + {n_q}} \right)^{n + 2\lambda + m + {n_q}+\frac{1}{2}}} \Lambda_{n_q,m}^{\alpha}(x,\eta) \times\\
{}_2\Upsilon_{\sigma,D}^{\lambda,\lambda_q}(n,n_q)\,\MFI_{n \ge m+n_q+1},
\end{gather}
$\forallL n, n_q$, where  
\begin{equation}
\varpi^{\text{upp}} = \left\{ \begin{array}{l}
\varpi^{\text{upp},+},\quad \forall \lambda \in \MBRzerP,\\
\varpi^{\text{upp,-}},\quad \lambda \in \MBRmhzer,\\
\end{array} \right.
\end{equation}
\begin{equation}
\frac{1}{\lambdabar_{\max}^{\lambda}} = \left\{ \begin{array}{l}
\displaystyle{\frac{1}{\lambdabar_n^{\lambda}}},\quad \lambda \in \MBRzerP,\\
\displaystyle{\frac{1}{\lambdabar_{m+n_q+1}^{\lambda}}},\quad \lambda \in \MBRmhzer,
\end{array} \right.
\end{equation}
\begin{equation}
{}_2\Upsilon_{\sigma,D}^{\lambda,\lambda_q}(n,n_q) = \begin{cases}
1, & \lambda \in \MBRzerP,\quad \lambda_q \in \MBRzerP, \\
\sigma^{\lambda} n^{-\lambda}, & \lambda \in \MBRmhzer,\quad \lambda_q \in \MBRzerP, \\
D^{\lambda_q} n_q^{-\lambda_q}, & \lambda \in \MBRzerP,\quad \lambda_q \in \MBRmhzer, \\
\sigma^{\lambda} D^{\lambda_q} n^{-\lambda} n_q^{-\lambda_q}, & \lambda \in \MBRmhzer,\quad \lambda_q \in \MBRmhzer,
\end{cases}
\end{equation}
$A, {\hvartheta _{m ,\lambda }}, D^{\lambda_q}$, and $\sigma^{\lambda}$ are constants with the definitions and properties outlined in Theorems \ref{thm:csbhvs12} and \ref{subsec:err:thm2}, as well as in Eq. \eqref{eq:sgvcmfhm1}.
\end{thm}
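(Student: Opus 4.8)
The total truncation error is, by construction, ${}^{\alpha}\ME{E}_{n,n_q}^{\lambda,\lambda_q}(x,\xi,\eta)=\CapD{x}{\alpha}{f}-\CapDEN{x}{\alpha}{f}$, and the plan is to split it into the two independent error sources already quantified in the preceding results,
\begin{align*}
{}^{\alpha}\ME{E}_{n,n_q}^{\lambda,\lambda_q}(x,\xi,\eta)
&=\bigl(\CapD{x}{\alpha}{f}-\CapD{x}{\alpha}{I_nf}\bigr)\\
&\quad+\bigl(\CapD{x}{\alpha}{I_nf}-\CapDEN{x}{\alpha}{f}\bigr),
\end{align*}
so that the triangle inequality gives $\bigl|{}^{\alpha}\ME{E}_{n,n_q}^{\lambda,\lambda_q}\bigr|\le\bigl|{}^{\alpha}\ME{T}_n^{\lambda}(x,\xi)\bigr|+\bigl|\CapD{x}{\alpha}{I_nf}-\CapDEN{x}{\alpha}{f}\bigr|$. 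The first bracket is exactly the quantity ${}^{\alpha}\ME{T}_n^{\lambda}(x,\xi)$ of Theorem \ref{subsec:err:thm1}, and Theorem \ref{thm:csbhvs12} bounds it by $A\,\hvartheta_{m,\lambda}\,2^{-2\lambda-2n-\frac{3}{2}}\,n^{\lambda+m}$ for $\forallL n$; this reproduces the first summand of the claimed bound verbatim. It therefore remains to estimate the second bracket, which is the genuinely new ingredient.

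For the second bracket I would expand $\CapD{x}{\alpha}{I_nf}$ via \eqref{eq:Soly1_2} --- equivalently, write $\CapD{x}{\alpha}{I_nf}=f_{0:n}^{\top}\,\CapD{x}{\alpha}{\C{L}_{0:n}^{\lambda}}$ and insert the modal form \eqref{sec:ort:eq:Lag1} --- so that it becomes a fixed linear combination, over $m\le j\le n$ and $0\le k\le n$, of the integrals $\C{I}_1^{(y)}{\hG_{j}^{\lambda,m}\cancbra{x\left(1-y^{\frac{1}{m-\alpha}}\right)}}$, with coefficients built from the nodal data $f_k$, the Christoffel numbers $\hvarpi_k^{\lambda}$, the sampled polynomial values $\hG_j^{\lambda}(\hx_{n,k}^{\lambda})$, the normalization factors $1/\hlambdabar_j^{\lambda}$, and the scalar $x^{m-\alpha}/\Gamma(m-\alpha+1)$. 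Replacing each such integral by its $(n_q,\lambda_q)$-SGPS quadrature \eqref{eq:Solynn1nn1} (whose weights are the entries of the SGIRV $\hFP$) turns the second bracket into the same linear combination of the per-integral quadrature errors $\FR{T}_{j,n_q}^{\lambda_q}(\eta)$ of Theorem \ref{subsec:err:thm3}, with $\eta=\eta(y)\in\IFOmega_1$ depending on $j$ and $x$. Since $\FR{T}_{j,n_q}^{\lambda_q}(\eta)=0$ for $j<m+n_q+1$ by Theorem \ref{subsec:err:thm3}, and $\hvarpi_k^{\lambda}>0$, this gives
\begin{align*}
\bigl|\CapD{x}{\alpha}{I_nf}-\CapDEN{x}{\alpha}{f}\bigr|
&\le\frac{x^{m-\alpha}}{\Gamma(m-\alpha+1)}\,\|f\|_{L^\infty(\FOmega_1)}\sum_{k=0}^{n}\hvarpi_k^{\lambda}\\
&\quad\times\max_{0\le k\le n}\sum_{j=m+n_q+1}^{n}\frac{\bigl|\hG_j^{\lambda}(\hx_{n,k}^{\lambda})\,\FR{T}_{j,n_q}^{\lambda_q}(\eta)\bigr|}{\hlambdabar_j^{\lambda}}.
\end{align*}

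It then remains to estimate the three structural factors. First, $\sum_{k}\hvarpi_k^{\lambda}$ equals the total SG quadrature weight on $\FOmega_1$, a finite $\lambda$-dependent constant, recorded as $\varpi^{\text{upp},+}$ for $\lambda\in\MBRzerP$ and as $\varpi^{\text{upp},-}$ for $\lambda\in\MBRmhzer$. Second, \eqref{eq:sgvcmfhm1} gives $\bigl|\hG_j^{\lambda}(\hx_{n,k}^{\lambda})\bigr|\le\|\hG_j^{\lambda}\|_{L^\infty(\FOmega_1)}$, which equals $1$ for $\lambda\in\MBRzerP$ and is $\sim\sigma^{\lambda}j^{-\lambda}$ for $\lambda\in\MBRmhzer$; and since $\hlambdabar_j^{\lambda}$ is monotone in $j$, the factor $1/\hlambdabar_j^{\lambda}$ is bounded on $m+n_q+1\le j\le n$ by $1/\lambdabar_{\max}^{\lambda}$, namely by $1/\lambdabar_n^{\lambda}$ when $\lambda\in\MBRzerP$ and by $1/\lambdabar_{m+n_q+1}^{\lambda}$ when $\lambda\in\MBRmhzer$. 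Third, for the inner sum I would invoke Theorem \ref{subsec:err:thm2}: under the standing hypothesis $m\ll n_q$ and in the convergent regime (where the divergent Case II, $j\gg n_q$, cannot occur because $j\le n$), its bound for $\bigl|\FR{T}_{j,n_q}^{\lambda_q}(\eta)\bigr|$ is, up to the $j$-independent factor $\Upsilon_D^{\lambda_q}(n_q)$, unimodal in $j$ over the admissible range, so the sum is controlled --- asymptotically and up to a constant absorbed into $B_m^{\lambda,\lambda_q}$ --- by its extreme term $j=n$. Substituting $j=n$ into the bound of Theorem \ref{subsec:err:thm2}, folding in the $j^{-\lambda}$ coming from $\|\hG_j^{\lambda}\|_{L^\infty(\FOmega_1)}$ and the $j$-power coming from $1/\hlambdabar_j^{\lambda}$, and restoring the prefactor $x^{m-\alpha}/\Gamma(m-\alpha+1)$, $\|f\|_{L^\infty(\FOmega_1)}$, and $\varpi^{\text{upp}}$, produces exactly the remaining factors $2^{-2n_q}$, $n^{2(1-m-\lambda)}$, $n_q^{-2n_q-m-\lambda+\lambda_q-\frac{5}{2}}$, $(n-n_q)^{-n+m+n_q+\frac{3}{2}}$, $(n+n_q)^{n+2\lambda+m+n_q+\frac{1}{2}}$, $\Lambda_{n_q,m}^{\alpha}(x,\eta)$, the indicator $\MFI_{n\ge m+n_q+1}$, and the four-branch factor ${}_2\Upsilon_{\sigma,D}^{\lambda,\lambda_q}(n,n_q)$, which merely records the four sign combinations of $\lambda$ and $\lambda_q$. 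Adding this second summand to the first yields the asserted bound.

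The main obstacle is the inner sum over $j$: one must justify rigorously that, for $m\ll n_q\le n$, the summand $\hG_j^{\lambda}(\hx_{n,k}^{\lambda})\,\FR{T}_{j,n_q}^{\lambda_q}(\eta)/\hlambdabar_j^{\lambda}$ is dominated, asymptotically and up to an absorbable constant, by a single value of $j$. This is precisely the content of the Case I versus Case II dichotomy discussed before the theorem --- one rules out the divergent contribution and then identifies the dominant endpoint of $[m+n_q+1,n]$, tracking the exponents of Theorem \ref{subsec:err:thm2} through the substitution $j=n$, which is where the small exponent corrections in the final bound --- in particular the passage from $j^{1-2m-2\lambda}$ and $(j-m-n_q)^{-j+m+n_q+\frac{1}{2}}$ to the factors $n^{2(1-m-\lambda)}$ and $(n-n_q)^{-n+m+n_q+\frac{3}{2}}$ --- originate, via the $1/\hlambdabar_j^{\lambda}$ and $\hG_j^{\lambda}$ contributions. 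The secondary difficulty is the bookkeeping of the two sign-dependent estimates --- $\|\hG_j^{\lambda}\|_{L^\infty(\FOmega_1)}$ from \eqref{eq:sgvcmfhm1} and the monotone behaviour of $\hlambdabar_j^{\lambda}$ in $j$ --- which must be combined consistently to produce the definitions of $\varpi^{\text{upp}}$, $1/\lambdabar_{\max}^{\lambda}$, and the four branches of ${}_2\Upsilon_{\sigma,D}^{\lambda,\lambda_q}(n,n_q)$, while checking that each estimate is used only within its asymptotic range, so that the $\forallL n,n_q$ qualifier legitimately permits discarding all lower-order terms.
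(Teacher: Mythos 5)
Your proposal follows essentially the same route as the paper: decompose the total error into the interpolation truncation error (bounded via Theorem \ref{thm:csbhvs12}) plus the double sum over $k$ and $j$ of the per-integral quadrature errors $\FR{T}_{j,n_q}^{\lambda_q}(\eta)$, bound the Christoffel numbers, the normalization factors and $\left\|\hG_j^{\lambda}\right\|_{L^{\infty}(\FOmega_1)}$ by their sign-dependent estimates, and collapse the $j$-sum onto $j=n$ before invoking Theorem \ref{subsec:err:thm2}. The one step you flag as the main obstacle---justifying that the $j$-dependent factor is dominated by the endpoint $j=n$---is exactly what the paper supplies as Lemma \ref{lem:cbsdc1} (the factor is strictly increasing on $j\ge m+n_q+1$, proved by showing its logarithmic derivative is positive), and your only other deviation is notational: the paper's $\varpi^{\text{upp},+}=\pi/(n+1)$ bounds a \emph{single} Christoffel number rather than their sum, the resulting factor $(n+1)$ from the $k$-sum being what turns the $n^{1-2m-2\lambda}$ of the $j=n$ substitution into the stated $n^{2(1-m-\lambda)}$.
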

\begin{proof}
The total truncation error is the sum of the truncation error associated with Caputo FD approximation \eqref{eq:Soly1_2}, ${}^{\alpha}\ME{T}_{n}^{\lambda}(x, \xi)$, and the accumulated truncation errors associated with the quadrature approximation \eqref{eq:Solynn1nn1}, for $j = m:n$, arising from Formula \eqref{eq:Soly1_2}:
\begin{gather}
{}^{\alpha}\ME{E}_{n, n_q}^{\lambda, \lambda_q}(x, \xi, \eta) 
= {}^{\alpha}\ME{T}_{n}^{\lambda}(x, \xi)\\
 + \frac{x^{m-\alpha}}{\Gamma(m-\alpha+1)} \sum_{k \in \MBJP_n} {\hvarpi_k^{\lambda} f_k \sum_{j \in \MBN_{m:n}} {\left(\hlambdabar_j^{\lambda}\right)^{-1} \FR{T}_{j, n_q}^{\lambda_q}(\eta)\,\hG_j^{\lambda} \left(\hx_{n,k}^{\lambda}\right)}}\\
  = {}^{\alpha}\ME{T}_{n}^{\lambda}(x, \xi) + \frac{x^{m-\alpha}}{\Gamma(m-\alpha+1)} \sum_{k \in \MBJP_n} {\varpi_k^{\lambda} f_k \sum_{j \in \MBN_{m:n}} {\left(\lambdabar_j^{\lambda}\right)^{-1} \FR{T}_{j, n_q}^{\lambda_q}(\eta)\,\hG_j^{\lambda} \left(\hx_{n,k}^{\lambda}\right)}},
\end{gather}
where $\FR{T}_{j, n_q}^{\lambda_q}(\eta)$ is the truncation error associated with the quadrature approximation \eqref{eq:Solynn1nn1} $\,\foralle j$, and ${\lambdabar}_{0:n}^{\lambda}$ and $\varpi_{0:n}^{\lambda}$ are the normalization factors for Gegenbauer polynomials and the Christoffel numbers associated with their quadratures. The key upper bounds on these latter factors were recently derived in \cite[Lemmas B.1 and B.2]{Elgindy2023d}:
\begin{gather}
\varpi _{j}^\lambda \simlteq \varpi^{\text{upp},+} = \frac{\pi }{{n + 1}}\quad \forall (j,\lambda) \in \,\MBJP_n \times \MBRzerP,\\
\varpi _{j}^\lambda < \varpi^{\text{upp,-}} = \frac{\Gamma^2(\lambda+1/2)}{2\, n^{1+2\lambda}}\quad \forall (j,\lambda) \in \,\MBJP_n \times \MBRmhzer,\\
\max\limits_{j \in \MBJP_n} {\frac{1}{\lambdabar_j^{\lambda}}} = \left\{ \begin{array}{l}
\displaystyle{\frac{1}{\lambdabar_n^{\lambda}}},\quad \lambda \in \MBRzerP,\\
\displaystyle{\frac{1}{\lambdabar_0^{\lambda}}},\quad \lambda \in \MBRmhzer,
\end{array} \right.
\end{gather}
where $\lambdabar_0^{\lambda} = \displaystyle{\frac{\sqrt{\pi}\,\Gamma(1/2+\lambda)}{\Gamma(1+\lambda)}}$. By combining these results with Eq. \eqref{eq:sgvcmfhm1}, we can bound the total truncation error by
\begin{gather}
\left|{}^{\alpha}\ME{E}_{n, n_q}^{\lambda, \lambda_q}(x, \xi, \eta)\right| 
\simlt \,A {\hvartheta _{m ,\lambda }}{2^{ - 2\lambda  - 2n - \frac{3}{2}}}{n^{\lambda  + m}} \\
+ \frac{\varpi^{\text{upp}} x^{m-\alpha} \left\|f\right\|_{L^{\infty}(\FOmega_1)}}{\lambdabar_{\max}^{\lambda}\,\Gamma(m-\alpha+1)} (n+1) (n-m-n_q) \max\limits_{j \in \MBN_{m+n_q+1:n}} \left|\FR{T}_{j, n_q}^{\lambda_q}(\eta)\right|\,\Upsilon_{\sigma}^{\lambda}(n),\label{eq:trryr1}
\end{gather}
$\forallL n$. Since the $j$-dependent polynomial factor
\[(j - m - n_q)^{-j + m + n_q + \frac{1}{2}} j^{-2\lambda - 2m + 1} (j + n_q)^{j + 2\lambda + m + n_q + \frac{1}{2}},\]
is maximized at $j = n$ by Lemma \ref{lem:cbsdc1}, the proof is accomplished by applying the asymptotic inequality \eqref{eq:SolySofy20251} on \eqref{eq:trryr1} after replacing $j$ with $n$.
\end{proof}

Under the assumptions of Theorem~\ref{thm:TotErrKKK1}, exponential error decay dominates the overall error behavior if $n_q \le n-m-k: 1 \le m, k \ll n_q$, provided that Convergence Condition~\eqref{eq:convcondver1} holds. In the special case when $n_q > n - m - 1$, the total truncation error reduces to pure interpolation error, as the quadrature truncation error vanishes. The rigorous asymptotic analysis presented in this section leads to the following practical guideline for selecting $\lambda$ and $\lambda_q$:\\

\textbf{Rule of Thumb} \textit{(Selection of $\lambda$ and $\lambda_q$ Parameters)}.
\begin{itemize}
\item \textit{High-precision computations}: Consider $\lambda \in (0, 2]$ with appropriately adjusted $\lambda_q$:
\[
-1/2 + \varepsilon \le \lambda_q \le \lambda + 2m + 1\quad \forallS m.
\]
\item \textit{General-purpose computations}: Consider $\lambda = \lambda_q = 0$ (SC quadrature) $\forallL n \text{ and } n_q$. This latter choice is motivated by the fact that the truncated expansion in the SC quadrature is known to be optimal in the $L^{\infty}$-norm for definite integral approximations of smooth functions. 
\end{itemize}

\begin{rem}
It is important to note that the observations made in this section rely on asymptotic results $\forallL n, n_q$. However, since the integrand is smooth when $\alpha \not\approx m$, the SG quadrature often achieves high accuracy with relatively few nodes. Smooth integrands may exhibit spectral convergence before asymptotic effects takes place as we demonstrate later in Section \ref{sec:CRAC1}.
\end{rem}

\section{Case Study: Caputo Fractional TPBVP of the Bagley–Torvik Type}
\label{sec:PS}
In this section, we consider the application of the proposed method on the following Caputo fractional TPBVP of the\\ Bagley–Torvik type defined as follows:
\begin{subequations}
\begin{equation}\label{eq:AD1}
a \CapD{x}{\alpha}{u} + b \CapD{x}{1.5}{u} + c u(x) = f(x),\quad x \in \FOmega_1,
\end{equation}
with the given Dirichlet boundary conditions
\begin{equation}\label{eq:AD2}
u(0) =  \gamma_1,\quad u(1) = \gamma_2,
\end{equation}
\end{subequations}
where $\alpha > 1, \{a,b,c,\gamma_{1:2}\} \subset \MBR$, and $f \in L^2(\FOmega_1)$. With the derived numerical instrument for approximating Caputo FDs, determining an accurate numerical solution of the TPBVP is rather straightforward. Indeed, collocating System \eqref{eq:AD1} at the SGG set $\left\{\hx_{n,0:n}^{\lambda}\right\} = \MBSG_n^{\lambda}$ in conjunction with Eq. \eqref{eq:mfkfj2} yield
\begin{subequations}
\begin{equation}\label{eq:zxncnscv1}
a \CapIM{n}{\alpha}\,u_{0:n} + b \CapIM{n}{1.5}\,u_{0:n} + c u_{0:n} = f_{0:n}.
\end{equation} 
Since $\hG_k^{\lambda}(0) = (-1)^k$ and $\hG_k^{\lambda}(1) = 1\,\forall k \in \MBJP_n$, by the properties of SG polynomials, substituting the boundary conditions \eqref{eq:AD2} into Eq. \eqref{sec:ort:eq:Lagint1} give the following system of equations:
\begin{gather}
	\left[\trp{{\hlambdabar_{0:n}^{\lambda}}^{\hspace{-1mm} \div}}\,\left(\left((-1)^{0:n} \otimes \bmone_{n+1}\right)^{\top} \odot \hG_{0:n}^{\lambda}\cancbra{\hbmx_n^{\lambda}}\right) \Diag{\hvarpi_{0:n}^{\lambda}}\right] u_{0:n} = \gamma_1,\label{eq:dfdffdvf1}\\
		\left[\trp{{\hlambdabar_{0:n}^{\lambda}}^{\hspace{-1mm} \div}}\,\hG_{0:n}^{\lambda}\cancbra{\hbmx_n^{\lambda}} \Diag{\hvarpi_{0:n}^{\lambda}}\right] u_{0:n} = \gamma_2.\label{eq:dfdffdvf2}
\end{gather}
\end{subequations}
Therefore, the linear system described by Eqs. \eqref{eq:zxncnscv1}, \eqref{eq:dfdffdvf1}, and \eqref{eq:dfdffdvf2} can now be compactly written in the following form:
\begin{equation}\label{eq:Finally1}
\F{A} u_{0:n} = \bm{F},
\end{equation}
where 
\begin{equation}
\F{A} = \begin{bmatrix}
a \CapIM{n}{\alpha} + b \CapIM{n}{1.5} + c \F{I}_{n+1}\\
\trp{{\hlambdabar_{0:n}^{\lambda}}^{\hspace{-1mm} \div}}\,\left(\left((-1)^{0:n} \otimes \bmone_{n+1}\right)^{\top} \odot \hG_{0:n}^{\lambda}\cancbra{\hbmx_n^{\lambda}}\right) \Diag{\hvarpi_{0:n}^{\lambda}}\\
\trp{{\hlambdabar_{0:n}^{\lambda}}^{\hspace{-1mm} \div}}\,\hG_{0:n}^{\lambda}\cancbra{\hbmx_n^{\lambda}} \Diag{\hvarpi_{0:n}^{\lambda}}
\end{bmatrix},
\end{equation}
and 
\begin{equation}
\bm{F} = [f_{0:n}^{\top}, \gamma_{1:2}]^{\top}.
\end{equation}
The solution of the linear system \eqref{eq:Finally1} provides the approximate solution values at the SGG points. The solution values at any non-collocated point in $\FOmega_1$ can further be estimated with excellent accuracy via the interpolation formula \eqref{sec:ort:eq:Lagint1}. 

When $\alpha \in \MBZP$, Caputo FD reduces to the classical integer-order derivative of the same order. In this case, we can use the first-order GDM in barycentric form, $\F{D}^{(1)}$, of \citet{elgindy2018highb}. This matrix enables the approximation of the function's derivative at the GG nodes using the function values at those nodes by employing matrix-vector multiplication. The entries of the differentiation matrix are computed based on the barycentric weights and GG nodes. The associated differentiation formula exhibits high accuracy, often exhibiting exponential convergence for smooth functions. This rapid convergence is a hallmark of PS methods and makes the GDM highly accurate for approximating derivatives. Furthermore, the utilization of barycentric forms improves the numerical stability of the differentiation matrix and lead to efficient computations. Using the properties of PS differentiation matrices, higher-order differentiation matrices can be readily generated through successive multiplication by the first-order GDM:
\begin{equation}
\F{D}^{(k)} = \F{D}^{(1)}_{(k)},\quad \forall k > 1.
\end{equation}
The SGDM of any order $k, \hFD^{(k)}$, based on the SGG points set $\MBSG_n^{\lambda}$, can be generated directly from $\F{D}^{(1)}$ using the following formula:
\begin{equation}
\hFD^{(k)} = 2^k\,\F{D}^{(1)}_{(k)},\quad \forall k \ge 1.
\end{equation}

\section{Numerical Examples}
\label{sec:CRAC1}
In this section, we present the numerical experiments conducted on a personal laptop equipped with an AMD Ryzen 7 4800H processor (2.9 GHz, 8 cores/16 threads), 16GB of RAM, and running Windows 11. All simulations were performed using MATLAB R2023b. The accuracy of the computed solutions was assessed using absolute errors and maximum absolute errors, which provide quantitative measures of the pointwise and worst-case discrepancies between the exact and numerical solutions, respectively.

\textbf{Example 1.} Consider the following Caputo fractional TPBVP of the Bagley–Torvik type
\begin{subequations}
\begin{equation}\label{eq:AD1sdgx}
\CapD{x}{2}{u} + \CapD{x}{1.5}{u} + u(x) = x^2 + 2 + 4 \sqrt{\frac{x}{\pi}},\quad x \in \FOmega_1,
\end{equation}
with the given Dirichlet boundary conditions
\begin{equation}\label{eq:AD2sdbdf}
u(0) = 0,\quad u(1) = 1.
\end{equation}
\end{subequations}
The exact solution is $u(x) = x^2$. This problem was solved by \citet{al2010collocation} using a method that combines conjugating collocation, spline analysis, and the shooting technique. Their reported error norm was $3.78 \times 10^{-12}$; cf. \cite{batool2025fractional}. Later, \citet{batool2025fractional} addressed the same problem using integral operational matrices based on Chelyshkov polynomials, transforming the problem into solvable Sylvester-type equations. They reported an error norm of $2.3388 \times 10^{-25}$, obtained using approximate solution terms with significantly more than 16 digits of precision. Specifically, the three terms used to derive this error included 32, 47, and 47 digits after the decimal point, indicating that the method utilizes extended or arbitrary-precision arithmetic, rather than being constrained to standard double precision. For a more fair comparison, since all components of our computational algorithm adhere to double-precision representations and computations, we recalculated their approximate solution using \cite[Eq.~(92)]{batool2025fractional} on the MATLAB platform with double-precision arithmetic. Our results indicate that the maximum absolute error in their approximate solution, evaluated at 50 equally spaced points in $\FOmega_1$, was approximately $2.22 \times 10^{-16}$. The SGPS method produced this same result using the parameters $n = n_q = 4$ and $\lambda = \lambda_q = 1.1$. The elapsed time required to run the SGPS method was $0.004732$ seconds. Figure~\ref{fig:figJan1} illustrates the exact solution, the approximate solution obtained using the SGPS method, and the absolute errors at the SGG collocation points.  

\begin{figure}[ht]
 \centering
 \includegraphics[scale=0.45]{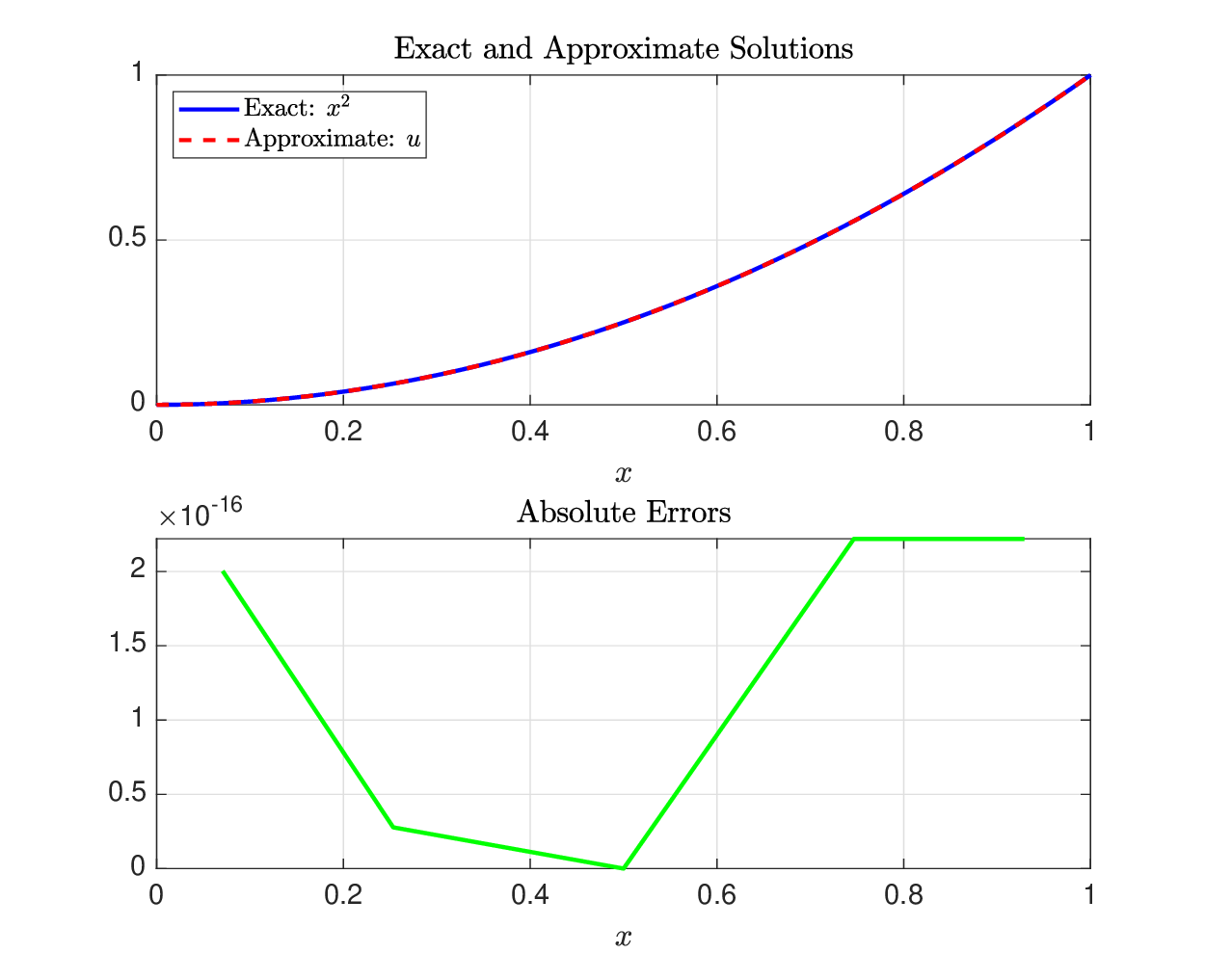}
 \caption{The exact solution of Example 1 and its approximation on $\FOmega_1$ (upper) and the absolute errors at the collocation points (lower). The approximate solution was obtained using the SGPS method with parameters $n = n_q = 4$ and $\lambda = \lambda_q = 1.1$.}
 \label{fig:figJan1}
\end{figure}

\textbf{Example 2.} Consider the following Caputo fractional TPBVP of the Bagley–Torvik type
\begin{subequations}
\begin{equation}\label{eq:AD1zazaza}
\CapD{x}{2}{u} + \CapD{x}{1.5}{u} + u(x) = 1+x,\quad x \in \FOmega_1,
\end{equation}
with the given Dirichlet boundary conditions
\begin{equation}\label{eq:AD2xsxsxsxs}
u(0) = 1,\quad u(1) = 2.
\end{equation}
\end{subequations}
The exact solution is $u(x) = 1+x$. \citet{yuzbacsi2013numerical} solved this problem using a numerical technique based on collocation points, matrix operations, and a generalized form of Bessel functions of the first kind. The maximum absolute error reported in \cite{yuzbacsi2013numerical} (at $M = 6$) was $4.6047 \times 10^{-8}$. Our SGPS method produced near exact solution values within a maximum absolute error of $4.44 \times 10^{-16}$ using $n = n_q = \lambda = \lambda_q = 2$; cf. Figure \ref{fig:figJan2}. The elapsed time required to run the SGPS method was $0.004142$ seconds. 

\begin{figure}[ht]
 \centering
 \includegraphics[scale=0.45]{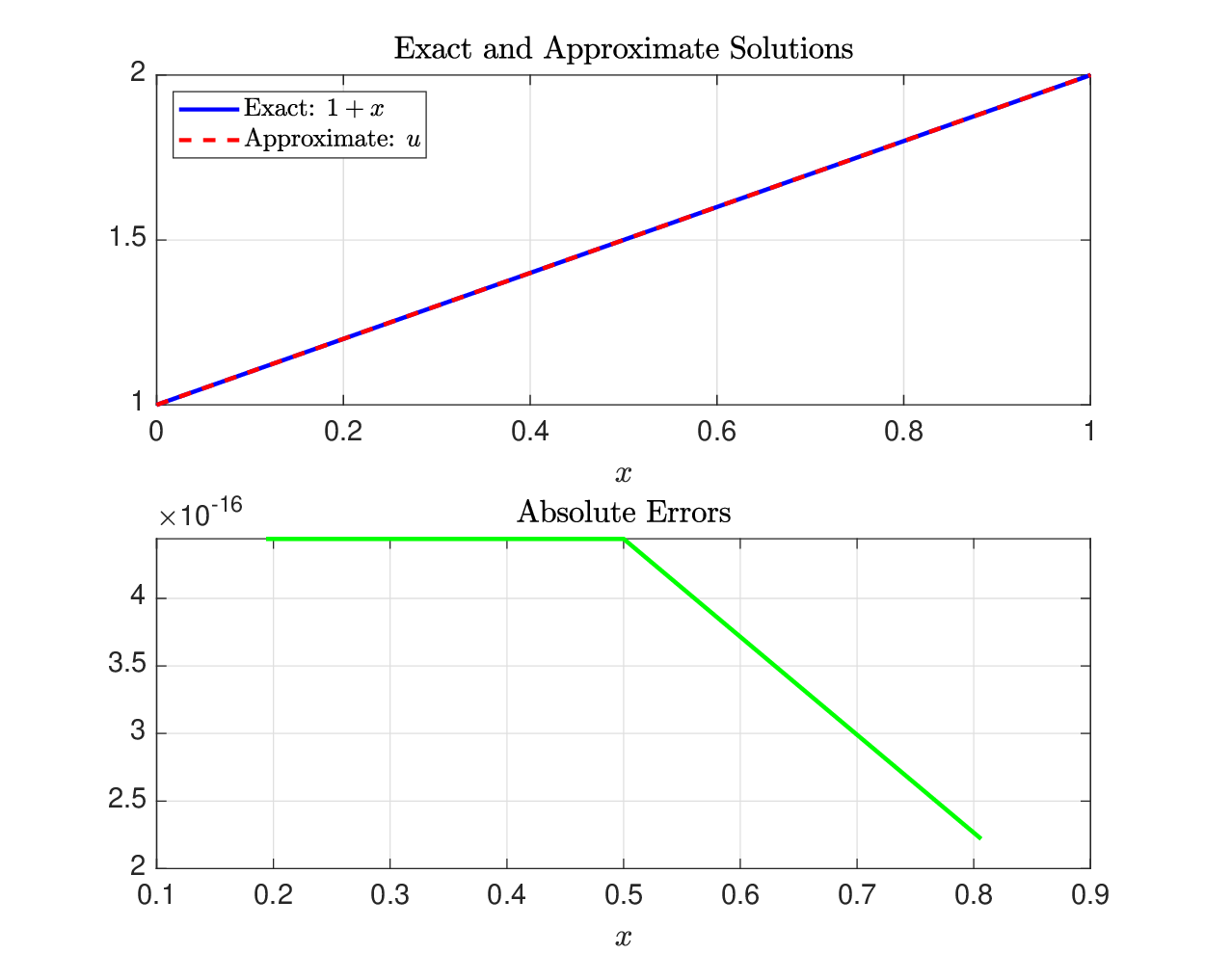}
 \caption{The exact solution of Example 2 and its approximation on $\FOmega_1$ (upper) and the absolute errors at the collocation points (lower). The approximate solution was obtained using the SGPS method with parameters $n = n_q = \lambda = \lambda_q = 2$.}
 \label{fig:figJan2}
\end{figure}

\textbf{Example 3.} Consider the following Caputo fractional TPBVP of the Bagley–Torvik type
\begin{subequations}
\begin{equation}\label{eq:AD1qwqwqwq}
\CapD{x}{1.5}{u} + u(x) = \frac{2}{\Gamma(3/2)} \sqrt{x} + x (x-1),\quad x \in \FOmega_1,
\end{equation}
with the given Dirichlet boundary conditions
\begin{equation}\label{eq:AD2wewewe}
u(0) = u(1) = 0.
\end{equation}
\end{subequations}
The exact solution is $u(x) = x^2-x$. Our SGPS method produced near exact solution values within a maximum absolute error of $1.94 \times 10^{-16}$ using $n = n_q = 3$ and $\lambda = \lambda_q = 1$; cf. Figure \ref{fig:figJan3}. The elapsed time required to run the SGPS method was $0.004160$ seconds. 

\begin{figure}[ht]
 \centering
 \includegraphics[scale=0.45]{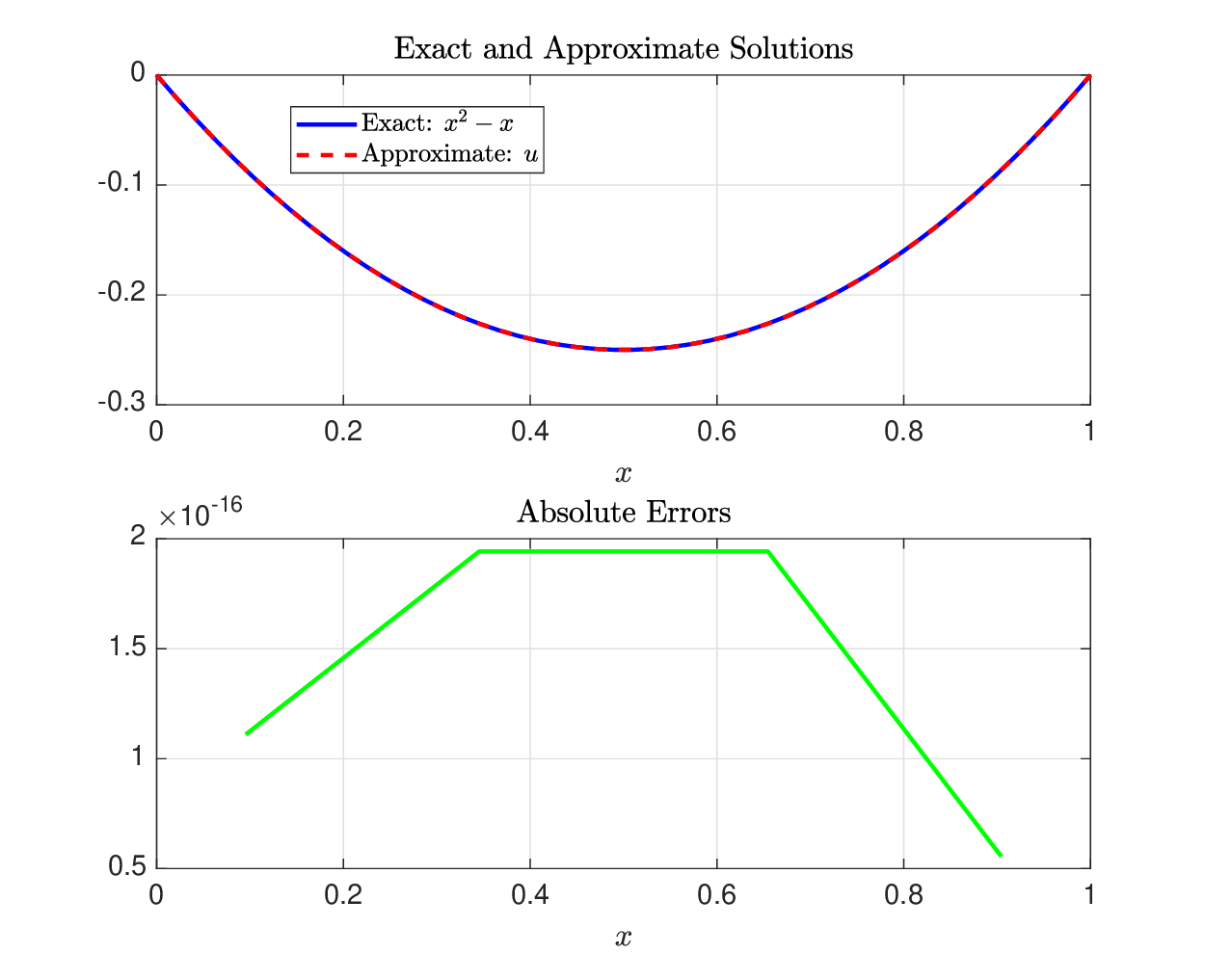}
 \caption{The exact solution of Example 3 and its approximation on $\FOmega_1$ (upper) and the absolute errors at the collocation points (lower). The approximate solution was obtained using the SGPS method with parameters $n = n_q = 3$ and $\lambda = \lambda_q = 1$.}
 \label{fig:figJan3}
\end{figure}

\section{Conclusion and Discussion}
\label{sec:Conc}
This study introduced an SGPS method for approximating the Caputo FD of higher orders and solving the TPBVPs of the Bagley–Torvik type. The proposed SGPS method introduces several novel contributions to the field of fractional numerical analysis, such as (i) the development of FSGIMs that accurately and efficiently approximate Caputo FDs at any random set of points using SG quadratures. This approach generalizes traditional pseudospectral differentiation matrices to the fractional-order setting, which we consider a significant theoretical advancement. (ii) The use of FSGIMs allow for pre-computation and storage, significantly accelerating execution of the SGPS method. (iii) The method applies an innovative change of variables that transforms the Caputo FD into a scaled integral of an integer-order derivative. This transformation simplifies computations, facilitates error analysis, and mitigates singularities in the Caputo FD near zero, which improves both stability and accuracy, (iv) the method can produce approximations withing near full machine precision at an exponential rate using relatively coarse mesh grids, (v) the method generally improves numerical stability and attempts to avoid issues related to ill-conditioning in classical pseudospectral differentiation matrices by using SG quadratures in barycentric form, (vi) the proposed methodology can be extended to multidimensional fractional problems, making it a strong candidate for future research in high-dimensional fractional differential equations, (vii) unlike traditional methods that treat interpolation and integration separately, the current method unifies these operations into a cohesive framework using SG polynomials. Numerical experiments validated the superior accuracy of the proposed method over existing techniques, achieving near-machine precision results in many cases. The current study also highlighted critical guidelines for selecting the parameters $\lambda$ and $\lambda_q$ to optimize the performance of the SGPS method for large interpolation and quadrature mesh sizes and relatively small fractional order $\alpha$. For high-precision computations, $\lambda$ should be chosen within the range $(0, 2]$, with $\lambda_q$ adjusted such that $-1/2 + \varepsilon \leq \lambda_q \leq \lambda + 2m + 1$, where $m = \left\lceil  \alpha  \right\rceil$. This ensures a balance between convergence speed and numerical stability. For general-purpose computations, setting $\lambda = \lambda_q = 0$ (corresponding to the SC quadrature) is recommended, as it provides optimal $L^\infty$-norm accuracy for smooth functions. The analysis also revealed that increasing $\lambda$ accelerates theoretical convergence but may introduce numerical instability due to extrapolation effects, while larger $\lambda_q$ values can slow convergence. These insights ensure robust and efficient implementations of the SGPS method across diverse problem settings. Future work may focus on extending the SGPS approach to multidimensional fractional problems alongside adaptive quadrature for unconditional convergence in finite-precision settings.

\section*{Declarations}
\subsection*{Competing Interests}
The author declares there is no conflict of interests.

\subsection*{Availability of Supporting Data}
The author declares that the data supporting the findings of this study are available within the article.

\subsection*{Ethical Approval and Consent to Participate and Publish}
Not Applicable.

\subsection*{Human and Animal Ethics}
Not Applicable.

\subsection*{Consent for Publication}
Not Applicable.

\subsection*{Funding}
The author received no financial support for the research, authorship, and/or publication of this article.

\subsection*{Authors' Contributions}
The author confirms sole responsibility for the following: study conception and design, data collection, analysis and interpretation of results, and manuscript preparation.


\appendix
\section{List of Acronyms}
\label{sec:LVA1}
\begin{table}[H]
    \centering
    \caption{List of Acronyms}
\resizebox{0.4\textwidth}{!}{%
    \begin{tabular}{|c|c|}
        \hline
        \textbf{Acronym} & \textbf{Meaning} \\
        \hline
        FD & Fractional derivative \\
        FSGIM & Fractional-order shifted Gegenbauer integration matrix \\
		GDM & Gegenbauer differentiation matrix \\
        GIM & Gegenbauer integration matrix \\
        GIRV & Gegenbauer integration row vector \\
        SC & Shifted Chebyshev \\
	    SGDM & Shifted Gegenbauer differentiation matrix \\
        SGIM & Shifted Gegenbauer integration matrix \\
        SGIRV & Shifted Gegenbauer integration row vector \\
        SGPS & Shifted Gegenbauer pseudospectral \\
        SG & Shifted Gegenbauer \\
        SGG & Shifted Gegenbauer-Gauss \\
        TPBVP & Two-point boundary value problem \\
        \hline
    \end{tabular}
    }
    \label{tab:acronyms}
\end{table}

\section{Mathematical Proof}
\label{sec:MP1}
\begin{lem}\label{lem:hello1}
Let \(\lambda > -\frac{1}{2} , m \ge 1 \), and \( j \geq m + n_q + 1 \). Then, the 
\( j \)-dependent factor in Eq. \eqref{eq:bnmbn1}:
\begin{equation}\label{eq:keyratio1}
   \frac{(j - m)!\,\Gamma(j + m + n_q + 2\lambda + 1)}{\Gamma(j + m + 2\lambda)\,\Gamma(j - m - n_q)},
\end{equation}
has the asymptotic order $O\left(j^{2n_q + 2}\right)$ as $j \to \infty, \forallL n_q$.
\end{lem}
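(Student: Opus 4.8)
The plan is to reduce the four-Gamma ratio in \eqref{eq:keyratio1} to an \emph{explicit} polynomial in $j$ and then simply read off its degree. First I would write $(j-m)! = \Gamma(j-m+1)$, so that \eqref{eq:keyratio1} becomes
$\dfrac{\Gamma(j-m+1)\,\Gamma(j+m+n_q+2\lambda+1)}{\Gamma(j+m+2\lambda)\,\Gamma(j-m-n_q)}$,
and then split it as the product of the two ratios
$R_1(j)=\dfrac{\Gamma(j-m+1)}{\Gamma(j-m-n_q)}$ and $R_2(j)=\dfrac{\Gamma(j+m+n_q+2\lambda+1)}{\Gamma(j+m+2\lambda)}$.
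The hypothesis $j \ge m+n_q+1$ (which is exactly the support of the indicator $\MFI_{j\ge m+n_q+1}$ in \eqref{eq:bnmbn1}) guarantees that $j-m-n_q\ge 1$ and, since $\lambda>-\tfrac12$, that $j+m+2\lambda>0$; hence all four Gamma values are finite and positive and the factorization is legitimate.

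Next I would invoke the elementary functional identity $\Gamma(w+k+1)=\Gamma(w)\prod_{i=0}^{k}(w+i)$, valid for every $w$ with $\Re w>0$ and every $k\in\MBZzerP$ (note one needs this \emph{continuous} form, not a mere factorial identity, because $2\lambda$ need not be an integer). Applying it with $w=j-m-n_q$, $k=n_q$ gives $R_1(j)=\prod_{i=0}^{n_q}(j-m-n_q+i)$, and with $w=j+m+2\lambda$, $k=n_q$ gives $R_2(j)=\prod_{i=0}^{n_q}(j+m+2\lambda+i)$. Therefore \eqref{eq:keyratio1} equals the single polynomial $P(j):=\prod_{i=0}^{n_q}(j-m-n_q+i)\prod_{i=0}^{n_q}(j+m+2\lambda+i)$ in the variable $j$.

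Finally I would observe that $P$ is a product of $2(n_q+1)=2n_q+2$ factors, each of which is monic and linear in $j$, so $P(j)=j^{2n_q+2}+O(j^{2n_q+1})$ with an implied constant depending only on $m,n_q,\lambda$. Dividing through by $j^{2n_q+2}$ shows $P(j)/j^{2n_q+2}\to 1$ as $j\to\infty$, which yields the stated order $O(j^{2n_q+2})$ — indeed $\Theta(j^{2n_q+2})$ — and in particular is uniform enough for the use made of it in Case II, where $j\gg n_q$.

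There is essentially no real obstacle here; the only point requiring a little care is the bookkeeping of ranges, namely checking that the Pochhammer-type identity is applied with arguments in the half-plane $\Re w>0$, which is precisely what $j\ge m+n_q+1$ together with $\lambda>-\tfrac12$ secures, and recalling that the non-integrality of $2\lambda$ forces the continuous (Gamma) form of that identity. As an alternative one could skip the exact factorization and simply quote the classical asymptotic $\Gamma(z+a)/\Gamma(z+b)\sim z^{\,a-b}$ as $z\to\infty$ for each of $R_1$ and $R_2$ (each contributing a factor $\sim j^{\,n_q+1}$), but the polynomial identity above is cleaner and self-contained.
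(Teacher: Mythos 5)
Your proof is correct, and it takes a genuinely different route from the paper's. The paper applies Stirling's approximation to each of the four Gamma factors separately and then collapses the resulting powers of $j$, $j\pm n_q$ using $(j\pm n_q)^k\approx j^k e^{\pm kn_q/j}$, arriving at $e^{2\lambda n_q/j}\,j^{2n_q+2}$. You instead pair the Gammas into the two ratios $\Gamma(j-m+1)/\Gamma(j-m-n_q)$ and $\Gamma(j+m+n_q+2\lambda+1)/\Gamma(j+m+2\lambda)$ and use the functional equation $\Gamma(w+k+1)=\Gamma(w)\prod_{i=0}^{k}(w+i)$ (legitimate here since $n_q$ is a nonnegative integer and both base arguments are positive under $j\ge m+n_q+1$, $\lambda>-\tfrac12$) to rewrite the expression \emph{exactly} as a monic polynomial of degree $2n_q+2$ in $j$. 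That buys you a sharper and cleaner conclusion: an exact identity giving $\Theta(j^{2n_q+2})$ with no asymptotic approximation at all, no appeal to Stirling, and no need for the intermediate $e^{\pm kn_q/j}$ manipulations. What it does not reproduce is the explicit transient prefactor $e^{2\lambda n_q/j}$ that the paper's Stirling computation exhibits before discarding it, but since that factor tends to $1$ it plays no role in the stated order, so nothing is lost for the use made of the lemma in Case II.
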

\begin{proof}
We analyze the asymptotic behavior of the expression as \( j \to \infty \) using Stirling's approximation for the Gamma function:
\[
\Gamma(z) \approx \sqrt{2\pi} \, z^{z - \frac{1}{2}} e^{-z}\quad \forallL z.
\]
By realizing also that \( (j - m)! = \Gamma(j - m + 1) \), we have:
   \[
   \begin{aligned}
   \Gamma(j - m + 1) &\approx \sqrt{2\pi} \, j^{j - m + \frac{1}{2}} e^{-j}, \\
   \Gamma(j + m + n_q + 2\lambda + 1) &\approx \sqrt{2\pi} \, (j + n_q)^{j + m + n_q + 2\lambda + \frac{1}{2}} e^{-j - n_q}, \\
   \Gamma(j + m + 2\lambda) &\approx \sqrt{2\pi} \, j^{j + m + 2\lambda - \frac{1}{2}} e^{-j}, \\
   \Gamma(j - m - n_q) &\approx \sqrt{2\pi}\,(j - n_q)^{j - m - n_q - \frac{1}{2}} e^{n_q-j}.
   \end{aligned}
   \]
Since \((j \pm n_q)^k \approx j^k (1 \pm \frac{n_q}{j})^k \approx j^k e^{\pm k n_q/j}\,\forallL j\), we can write the key ratio \eqref{eq:keyratio1} as follows:
\begin{gather}
\frac{\Gamma(j - m + 1)\Gamma(j + m + n_q + 2\lambda + 1)}{\Gamma(j + m + 2\lambda)\Gamma(j - m - n_q)}\\
\approx \frac{j^{j - m + \frac{1}{2}}(j + n_q)^{j + m + n_q + 2\lambda + \frac{1}{2}}}{j^{j + m + 2\lambda - \frac{1}{2}}(j - n_q)^{j - m - n_q - \frac{1}{2}}} e^{-2n_q}\\
\approx \frac{j^{j - m + \frac{1}{2}} j^{j + m + n_q + 2\lambda + \frac{1}{2}} e^{(j + m + n_q + 2\lambda + \frac{1}{2})n_q/j}}{j^{j + m + 2\lambda - \frac{1}{2}} j^{j - m - n_q - \frac{1}{2}} e^{-(j - m - n_q - \frac{1}{2})n_q/j}} e^{-2n_q}\\
= {e^{\frac{{2\lambda {n_q}}}{j}}}{j^{2 + 2{n_q}}} = O\left(j^{2n_q + 2}\right),\quad \text{as }j \to \infty, \forallL n_q.
\end{gather}
\end{proof}

The following lemma is useful in analyzing the error bound of Theorem \ref{subsec:err:thm2}.

\begin{lem}\label{lem:cbsdc1}
Let $\lambda > -\frac{1}{2}$ and $m \geq 1$ be an integer. The function
\[
E(j) = j^{-2\lambda - 2m + 1} (j - m - n_q)^{-j + m + n_q + \frac{1}{2}} (j + n_q)^{j + 2\lambda + m + n_q + \frac{1}{2}},
\]
is strictly increasing with $j\,\forall j \ge m+n_q+1\,\forallL n_q$.
\end{lem}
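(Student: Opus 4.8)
The plan is to treat $j$ as a continuous real variable on $[m+n_q+1,\infty)$, on which every base occurring in $E(j)$ is strictly positive (in particular $j-m-n_q\ge 1$), so that $E(j)>0$ and the sign of $E'(j)$ equals the sign of $(\ln E)'(j)$; it therefore suffices to show $(\ln E)'(j)>0$ there. First I would write $\ln E(j)$ as the sum of $(1-2\lambda-2m)\ln j$, $(-j+m+n_q+\tfrac12)\ln(j-m-n_q)$, and $(j+2\lambda+m+n_q+\tfrac12)\ln(j+n_q)$, and differentiate. The coefficients linear in $j$ produce $-\ln(j-m-n_q)+\ln(j+n_q)=\ln\frac{j+n_q}{j-m-n_q}$, while the leftover rational pieces simplify via $\frac{-j+m+n_q+1/2}{j-m-n_q}=-1+\frac{1/2}{j-m-n_q}$ and $\frac{j+2\lambda+m+n_q+1/2}{j+n_q}=1+\frac{2\lambda+m+1/2}{j+n_q}$, the $\mp 1$ cancelling, so that
\begin{gather*}
(\ln E)'(j)=\ln\frac{j+n_q}{j-m-n_q}+\frac{1/2}{j-m-n_q}\\
+\frac{2\lambda+m+1/2}{j+n_q}+\frac{1-2\lambda-2m}{j}.
\end{gather*}

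Next I would isolate the only potentially negative term, $\frac{1-2\lambda-2m}{j}$ (negative since $\lambda>-1/2$ and $m\ge 1$ force $1-2\lambda-2m<0$), and bound the remaining three terms from below. Applying $\ln(1+x)\ge x/(1+x)$ with $x=\frac{m+2n_q}{j-m-n_q}>0$ gives $\ln\frac{j+n_q}{j-m-n_q}\ge\frac{m+2n_q}{j+n_q}$; using $2\lambda+m+1/2>0$ and discarding the nonnegative term $\frac{1/2}{j-m-n_q}$ then yields
\[
(\ln E)'(j)\ge\frac{2m+2n_q+2\lambda+1/2}{j+n_q}-\frac{2\lambda+2m-1}{j}.
\]
Since $j\ge m+n_q+1>n_q$ we have $j+n_q<2j$, hence $\frac{1}{j+n_q}>\frac{1}{2j}$, and substituting this estimate collapses the right-hand side to $\frac{2n_q-2m-2\lambda+5/2}{2j}$, which is strictly positive as soon as $n_q>m+\lambda-5/4$. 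This proves $(\ln E)'(j)>0$ on the whole interval for all sufficiently large $n_q$, so $E$ is strictly increasing there, and a fortiori along the integers $j\ge m+n_q+1$. (An equivalent route is to verify $E(j+1)/E(j)>1$ directly, but the logarithmic-derivative computation is cleaner.)

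The one delicate point is controlling the logarithmic term uniformly in $j$. At the left endpoint $j=m+n_q+1$ it equals $\ln(m+2n_q+1)$ and dominates everything, so positivity there is immediate; but as $j\to\infty$ it decays like $(m+2n_q)/j$, the same order as the competing negative term $(2\lambda+2m-1)/j$, so one must compare leading coefficients rather than merely sign the terms — which is precisely the function served by the sharp bound $\ln(1+x)\ge x/(1+x)$ together with the crude estimate $j+n_q<2j$. A finer analysis of the logarithm would remove the ``$\forallL n_q$'' hypothesis, but this is not needed, since this lemma is only invoked in Theorems \ref{subsec:err:thm2} and \ref{thm:TotErrKKK1} under the standing assumption $m\ll n_q$.
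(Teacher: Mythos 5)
Your proof follows the same skeleton as the paper's --- take the logarithm of the same three-term decomposition of $E(j)$, differentiate, and reduce to showing
\begin{equation}
\Part{j}\ln E(j)=\ln\frac{j+n_q}{j-m-n_q}+\frac{1/2}{j-m-n_q}+\frac{2\lambda+m+1/2}{j+n_q}+\frac{1-2\lambda-2m}{j}>0,
\end{equation}
--- but the way you close the argument is genuinely different and, in fact, more sound. The paper signs the four terms separately ($\ln(\cdot)>0$, $A/j\to 0^-$, $B/(j-m-n_q)\in(-1,-1/2]$, $C/(j+n_q)=1^+$) and then asserts that ``the rational terms combine to give a positive quantity.'' That assertion is not justified and is false in general: for large $j$ the three rational terms sum to approximately $(2-m)/j$, which is negative whenever $m\ge 3$, so the logarithmic term is actually needed to rescue positivity, and the paper never quantifies it. Your bound $\ln(1+x)\ge x/(1+x)$ applied with $x=(m+2n_q)/(j-m-n_q)$, followed by $1/(j+n_q)>1/(2j)$, supplies exactly the missing quantitative comparison of the competing $O(1/j)$ terms and yields the explicit lower bound $(2n_q-2m-2\lambda+5/2)/(2j)$, positive once $n_q>m+\lambda-5/4$ --- which is precisely where the lemma's $\forallL n_q$ hypothesis is consumed (the paper's proof never visibly uses it). Two small points to make explicit: the coefficient $2m+2n_q+2\lambda+1/2$ multiplying $1/(j+n_q)$ is positive, so replacing $1/(j+n_q)$ by the smaller $1/(2j)$ does preserve the direction of the inequality; and monotonicity of the continuous extension on $[m+n_q+1,\infty)$ transfers to the integer values of $j$, as you note. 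In short, your proposal is correct and repairs a gap in the paper's own final step.
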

\begin{proof}
Suppose that the assumptions of the lemma hold true. We show first that the logarithmic derivative of $E(j)$ is positive $\,\forall j \ge m+n_q+1$. To this end, take the natural logarithm:
\[
\ln E(j) = A\ln j + B\ln(j - m - n_q) + C\ln(j + n_q),
\]
where
\begin{align*}
A &= -2\lambda - 2m + 1, \\
B &= -j + m + n_q + \tfrac{1}{2}, \\
C &= j + 2\lambda + m + n_q + \tfrac{1}{2}.
\end{align*}
Differentiating with respect to $j$ yields:
\begin{gather}
\Part{j}\ln E(j) = \frac{A}{j} - \ln(j - m - n_q) + \frac{B}{j - m - n_q} + \ln(j + n_q) + \frac{C}{j + n_q} \\
= \ln\left(\frac{j + n_q}{j - m - n_q}\right) + \frac{A}{j} + \frac{B}{j - m - n_q} + \frac{C}{j + n_q}.
\end{gather}
For $j \geq m + n_q + 1$, we have:
\begin{itemize}
\item $\displaystyle{\ln\left(\frac{j + n_q}{j - m - n_q}\right)} > 0$, since $\displaystyle{\frac{j + n_q}{j - m - n_q}} > 1$.
\item $\displaystyle{\frac{A}{j} \to 0^{-}}$.
\item $\displaystyle{\frac{B}{j - m - n_q} = -1 + \frac{1}{2 (j-m-n_q)} \in (-1,-1/2]}$.
\item $\displaystyle{\frac{C}{j + n_q} = 1 + \frac{2\lambda+m+1/2}{j+n_q} = 1^+.}$
\end{itemize}
The rational terms combine to give a positive quantity. Thus, the logarithmic derivative, $\Part{j}\ln E(j)$, is positive $\,\forall j \ge m+n_q+1$. Since the natural logarithm is strictly increasing, it follows that $E(j)$ itself must be strictly increasing with $j$ in that range.
\end{proof}

\bibliographystyle{model1-num-names}
\bibliography{Bib}

\begin{thebibliography}{25}
\expandafter\ifx\csname natexlab\endcsname\relax\def\natexlab#1{#1}\fi
\providecommand{\bibinfo}[2]{#2}
\ifx\xfnm\relax \def\xfnm[#1]{\unskip,\space#1}\fi
\bibitem[{Saw and Kumar(2019)}]{saw2019numerical}
\bibinfo{author}{V.~Saw}, \bibinfo{author}{S.~Kumar},
\newblock \bibinfo{title}{Numerical solution of fraction {B}agley--{T}orvik
  boundary value problem based on {C}hebyshev collocation method},
\newblock \bibinfo{journal}{International Journal of Applied and Computational
  Mathematics} \bibinfo{volume}{5} (\bibinfo{year}{2019}) \bibinfo{pages}{68}.
\bibitem[{Ji et~al.(2020)Ji, Hou, and Yang}]{ji2020numerical}
\bibinfo{author}{T.~Ji}, \bibinfo{author}{J.~Hou}, \bibinfo{author}{C.~Yang},
\newblock \bibinfo{title}{Numerical solution of the {B}agley--{T}orvik equation
  using shifted {C}hebyshev operational matrix},
\newblock \bibinfo{journal}{Advances in Difference Equations}
  \bibinfo{volume}{2020} (\bibinfo{year}{2020}) \bibinfo{pages}{648}.
\bibitem[{Hou et~al.(2020)Hou, Yang, and Lv}]{hou2020jacobi}
\bibinfo{author}{J.~Hou}, \bibinfo{author}{C.~Yang}, \bibinfo{author}{X.~Lv},
\newblock \bibinfo{title}{Jacobi collocation methods for solving the fractional
  {B}agley--{T}orvik equation},
\newblock \bibinfo{journal}{Int. J. Appl. Math} \bibinfo{volume}{50}
  (\bibinfo{year}{2020}) \bibinfo{pages}{114--120}.
\bibitem[{Ji and Hou(2020)}]{ji2020numericalL}
\bibinfo{author}{T.~Ji}, \bibinfo{author}{J.~Hou},
\newblock \bibinfo{title}{Numerical solution of the {B}agley--{T}orvik equation
  using {L}aguerre polynomials},
\newblock \bibinfo{journal}{SeMA Journal} \bibinfo{volume}{77}
  (\bibinfo{year}{2020}) \bibinfo{pages}{97--106}.
\bibitem[{Kaur et~al.(2019)Kaur, Kumar, and Arora}]{kaur2019non}
\bibinfo{author}{H.~Kaur}, \bibinfo{author}{R.~Kumar},
  \bibinfo{author}{G.~Arora},
\newblock \bibinfo{title}{Non-dyadic wavelets based computational technique for
  the investigation of {B}agley--{T}orvik equations},
\newblock \bibinfo{journal}{Int J Emerg Technol.} \bibinfo{volume}{10}
  (\bibinfo{year}{2019}) \bibinfo{pages}{1--14}.
\bibitem[{Dincel(2021)}]{dincel2021sine}
\bibinfo{author}{A.~T. Dincel},
\newblock \bibinfo{title}{A sine-cosine wavelet method for the approximation
  solutions of the fractional {B}agley--{T}orvik equation},
\newblock \bibinfo{journal}{Sigma Journal of Engineering and Natural Sciences}
  \bibinfo{volume}{40} (\bibinfo{year}{2021}) \bibinfo{pages}{150--154}.
\bibitem[{Rabiei and Razzaghi(2022)}]{rabiei2022numerical}
\bibinfo{author}{K.~Rabiei}, \bibinfo{author}{M.~Razzaghi},
\newblock \bibinfo{title}{The numerical solution of the fractional
  {B}agley--{T}orvik equation by the {B}oubaker wavelets},
\newblock in: \bibinfo{booktitle}{Acoustics and Vibration of Mechanical
  Structures--AVMS-2021: Proceedings of the 16th AVMS, Timi{\c{s}}oara,
  Romania, May 28-29, 2021}, \bibinfo{publisher}{Springer},
  \bibinfo{year}{2022}, pp. \bibinfo{pages}{27--37}.
\bibitem[{Abd-Elhameed and Youssri(2016)}]{abd2016spectral}
\bibinfo{author}{W.~Abd-Elhameed}, \bibinfo{author}{Y.~Youssri},
\newblock \bibinfo{title}{Spectral solutions for fractional differential
  equations via a novel {L}ucas operational matrix of fractional derivatives},
\newblock \bibinfo{journal}{Rom. J. Phys} \bibinfo{volume}{61}
  (\bibinfo{year}{2016}) \bibinfo{pages}{795--813}.
\bibitem[{Youssri(2017)}]{youssri2017new}
\bibinfo{author}{Y.~H. Youssri},
\newblock \bibinfo{title}{A new operational matrix of {C}aputo fractional
  derivatives of {F}ermat polynomials: {A}n application for solving the
  {B}agley--{T}orvik equation},
\newblock \bibinfo{journal}{Advances in Difference Equations}
  \bibinfo{volume}{2017} (\bibinfo{year}{2017}) \bibinfo{pages}{1--17}.
\bibitem[{Izadi and Negar(2020)}]{izadi2020local}
\bibinfo{author}{M.~Izadi}, \bibinfo{author}{M.~R. Negar},
\newblock \bibinfo{title}{Local discontinuous {G}alerkin approximations to
  fractional {B}agley--{T}orvik equation},
\newblock \bibinfo{journal}{Mathematical Methods in the Applied Sciences}
  \bibinfo{volume}{43} (\bibinfo{year}{2020}) \bibinfo{pages}{4798--4813}.
\bibitem[{Chen(2020)}]{chen2020fast}
\bibinfo{author}{J.~Chen},
\newblock \bibinfo{title}{A fast multiscale {G}alerkin algorithm for solving
  boundary value problem of the fractional {B}agley--{T}orvik equation},
\newblock \bibinfo{journal}{Boundary Value Problems} \bibinfo{volume}{2020}
  (\bibinfo{year}{2020}) \bibinfo{pages}{1--13}.
\bibitem[{Tamilselvan et~al.(2023)}]{tamilselvan2023second}
\bibinfo{author}{A.~Tamilselvan}, et~al.,
\newblock \bibinfo{title}{Second order spline method for fractional
  {B}agley--{T}orvik equation with variable coefficients and {R}obin boundary
  conditions},
\newblock \bibinfo{journal}{Journal of Mathematical Modeling}
  \bibinfo{volume}{11} (\bibinfo{year}{2023}) \bibinfo{pages}{117--132}.
\bibitem[{Verma and Kumar(2021)}]{verma2021numerical}
\bibinfo{author}{A.~Verma}, \bibinfo{author}{M.~Kumar},
\newblock \bibinfo{title}{Numerical solution of {B}agley--{T}orvik equations
  using {L}egendre artificial neural network method},
\newblock \bibinfo{journal}{Evolutionary Intelligence} \bibinfo{volume}{14}
  (\bibinfo{year}{2021}) \bibinfo{pages}{2027--2037}.
\bibitem[{Elgindy(2016)}]{Elgindy20161}
\bibinfo{author}{K.~T. Elgindy},
\newblock \bibinfo{title}{High-order numerical solution of second-order
  one-dimensional hyperbolic telegraph equation using a shifted {G}egenbauer
  pseudospectral method},
\newblock \bibinfo{journal}{Numerical Methods for Partial Differential
  Equations} \bibinfo{volume}{32} (\bibinfo{year}{2016})
  \bibinfo{pages}{307--349}.
\bibitem[{Elgindy(2017)}]{Elgindy20171}
\bibinfo{author}{K.~T. Elgindy},
\newblock \bibinfo{title}{High-order, stable, and efficient pseudospectral
  method using barycentric {G}egenbauer quadratures},
\newblock \bibinfo{journal}{Applied Numerical Mathematics}
  \bibinfo{volume}{113} (\bibinfo{year}{2017}) \bibinfo{pages}{1--25}.
\bibitem[{Elgindy(2018)}]{elgindy2018optimal}
\bibinfo{author}{K.~T. Elgindy},
\newblock \bibinfo{title}{Optimal control of a parabolic distributed parameter
  system using a fully exponentially convergent barycentric shifted
  {G}egenbauer integral pseudospectral method},
\newblock \bibinfo{journal}{Journal of Industrial \& Management Optimization}
  \bibinfo{volume}{14} (\bibinfo{year}{2018}) \bibinfo{pages}{473}.
\bibitem[{Elgindy and Refat(2018)}]{elgindy2018high}
\bibinfo{author}{K.~T. Elgindy}, \bibinfo{author}{H.~M. Refat},
\newblock \bibinfo{title}{High-order shifted {G}egenbauer integral
  pseudo-spectral method for solving differential equations of {L}ane--{E}mden
  type},
\newblock \bibinfo{journal}{Applied Numerical Mathematics}
  \bibinfo{volume}{128} (\bibinfo{year}{2018}) \bibinfo{pages}{98--124}.
\bibitem[{Elgindy(2025)}]{elgindy2025fourier}
\bibinfo{author}{K.~T. Elgindy},
\newblock \bibinfo{title}{Fourier-{G}egenbauer integral-{G}alerkin method for
  solving the advection-diffusion equation with periodic boundary conditions},
\newblock \bibinfo{journal}{arXiv preprint arXiv:2501.02307}
  (\bibinfo{year}{2025}).
\bibitem[{Szeg\"{o}(1975)}]{Szego1975}
\bibinfo{author}{G.~Szeg\"{o}}, \bibinfo{title}{{O}rthogonal {P}olynomials},
  volume~\bibinfo{volume}{23}, \bibinfo{publisher}{Am. Math. Soc. Colloq.
  Pub.}, \bibinfo{year}{1975}.
\bibitem[{Elgindy and Smith-Miles(2013)}]{Elgindy201382}
\bibinfo{author}{K.~T. Elgindy}, \bibinfo{author}{K.~A. Smith-Miles},
\newblock \bibinfo{title}{Optimal {G}egenbauer quadrature over arbitrary
  integration nodes},
\newblock \bibinfo{journal}{Journal of Computational and Applied Mathematics}
  \bibinfo{volume}{242} (\bibinfo{year}{2013}) \bibinfo{pages}{82--106}.
\bibitem[{Elgindy and Refat(2024)}]{Elgindy2023d}
\bibinfo{author}{K.~T. Elgindy}, \bibinfo{author}{H.~M. Refat},
\newblock \bibinfo{title}{Direct integral pseudospectral and integral spectral
  methods for solving a class of infinite horizon optimal output feedback
  control problems using rational and exponential {G}egenbauer polynomials},
\newblock \bibinfo{journal}{Mathematics and Computers in Simulation}
  \bibinfo{volume}{219} (\bibinfo{year}{2024}) \bibinfo{pages}{297--320}.
\bibitem[{Elgindy and Dahy(2018)}]{elgindy2018highb}
\bibinfo{author}{K.~T. Elgindy}, \bibinfo{author}{S.~A. Dahy},
\newblock \bibinfo{title}{High-order numerical solution of viscous {B}urgers'
  equation using a {C}ole-{H}opf barycentric {G}egenbauer integral
  pseudospectral method},
\newblock \bibinfo{journal}{Mathematical Methods in the Applied Sciences}
  \bibinfo{volume}{41} (\bibinfo{year}{2018}) \bibinfo{pages}{6226--6251}.
\bibitem[{Al-Mdallal et~al.(2010)Al-Mdallal, Syam, and
  Anwar}]{al2010collocation}
\bibinfo{author}{Q.~M. Al-Mdallal}, \bibinfo{author}{M.~I. Syam},
  \bibinfo{author}{M.~Anwar},
\newblock \bibinfo{title}{A collocation-shooting method for solving fractional
  boundary value problems},
\newblock \bibinfo{journal}{Communications in Nonlinear Science and Numerical
  Simulation} \bibinfo{volume}{15} (\bibinfo{year}{2010})
  \bibinfo{pages}{3814--3822}.
\bibitem[{Batool et~al.(2025)Batool, Talib, and Riaz}]{batool2025fractional}
\bibinfo{author}{A.~Batool}, \bibinfo{author}{I.~Talib}, \bibinfo{author}{M.~B.
  Riaz},
\newblock \bibinfo{title}{Fractional-order boundary value problems solutions
  using advanced numerical technique},
\newblock \bibinfo{journal}{Partial Differential Equations in Applied
  Mathematics}  (\bibinfo{year}{2025}) \bibinfo{pages}{101059}.
\bibitem[{Y{\"u}zba{\c{s}}{\i}(2013)}]{yuzbacsi2013numerical}
\bibinfo{author}{{\c{S}}.~Y{\"u}zba{\c{s}}{\i}},
\newblock \bibinfo{title}{Numerical solution of the {B}agley--{T}orvik equation
  by the {B}essel collocation method},
\newblock \bibinfo{journal}{Mathematical methods in the applied sciences}
  \bibinfo{volume}{36} (\bibinfo{year}{2013}) \bibinfo{pages}{300--312}.

\end{thebibliography}
\end{document}